\pgfplotsset{compat=newest}
\numberwithin{equation}{section}
\renewcommand{\epsilon}{\varepsilon}
\theoremstyle{plain}
\newtheorem{thm}{Theorem}[section]
\newtheorem{cor}[thm]{Corollary}
\newtheorem{lem}[thm]{Lemma}
\newtheorem{remark}[thm]{Remark}
\newtheorem*{theorem*}{Theorem}
\newtheorem*{proposition*}{Proposition}
\theoremstyle{definition}
\theoremstyle{remark}
\def\RR{{\mathbb R}}
\def\<{\left\langle}
\def\>{\right\rangle}
\def\RR{{\mathbb R}}
\def\({\left(}
\def\){\right)}
\begin{document}
\begin{title}
{A Willmore-type inequality for hypersurfaces with asymptotic or integral Ricci curvature bounds}
\end{title}
\begin{author}{Jihye Lee}\end{author}

\date{\today}

\maketitle

\vspace{-1cm}
\begin{abstract}
\noindent 
We establish Willmore-type inequalities for bounded domains in complete non-compact Riemannian manifolds, under either asymptotic or integral Ricci curvature bounds. Those results recover a recent inequality of Jin-Yin \cite[Theorem 1.3]{jin-yin}.
\\

\noindent {\it Mathematics Subject Classification(2020)}: 53A07, 53C21\\
\noindent {\it Key words and phrases}: Willmore inequality, Integral Ricci curvature

\end{abstract}

\section{Introduction}

In differential geometry, the mean curvature plays a central role in the study of the extrinsic geometry of submanifolds, appearing naturally in variational problems and geometric flows. 
The Willmore functional, defined by
$\int_\Sigma \left( \frac{H}{2} \right)^2 d \sigma$,
is a classical energy functional associated with mean curvature $H$, measuring the bending of a surface.
In 1968, Willmore \cite{Willmore68} proved that any compact, closed surface in $\RR^3$ satisfies 
$$
\int_\Sigma \left(\frac{H}{2}\right)^2 d \sigma \geq 16 \pi
$$
with equality if and only if $\Sigma$ is a round sphere.
This result was extended to higher-dimensional Euclidean spaces by Chen \cite{Chen71}, and more recently to manifolds with nonnegative Ricci curvature by Agostiniani, Fogagnolo, and Mazzieri \cite{AFM2020}, using a monotonicity formula arising from potential theory. Wang \cite{XW_willmore} later gave an alternative proof of this result using tools from comparison geometry.
Motivated by these developments, various generalizations of Willmore-type inequalities have been established, including cases with asymptotically nonnegative Ricci curvature \cite{rudnik2023}, substatic manifolds \cite{BF23}, lower Ricci curvature bounds such as $\mathrm{Ric}\geq -n$ \cite{jin-yin}, and higher-codimension settings \cite{JK2025}. See also \cite{JWXZ2025} and \cite{wuwu2024} for related results.

In this paper, we generalize the Willmore inequality \cite[Theorem 1.3]{jin-yin}, which was originally established under the pointwise Ricci curvature bound $\mathrm{Ric} \geq -n$ on complete non-compact Riemannian manifolds of dimension $n+1$. For each point $x \in M$, we define the function
$$
\rho(x) := \max\{-n - \mathrm{Ric}(x), 0\},
$$
which measures the amount of Ricci curvature below $-n$ at the point $x$. 
We have 
\begin{equation}
    \mathrm{Ric}(x) \geq -n - \rho(x),
\end{equation}
which yields a Riccati inequality \eqref{ineq:riccati} for the mean curvature of tubular neighborhoods of hypersurfaces.
Using this, we derive a mean curvature comparison involving $L^p$ norm of $\rho$ (see inequality~\eqref{ineq:mean-integral-comp}), which in turn leads to a refined upper bound for the volume of tubular neighborhoods of hypersurfaces.
The main challenge lies in obtaining effective volume bounds, as existing estimates (\cite[Theorem 2]{Gallot88}, \cite[Lemma 4.1]{PetersenSprouse}) either become too large to be useful or are restricted to hypersurfaces with constant mean curvature. The associated quantity $\|\rho\|_{L^p(M)}$, often referred to as the integral Ricci curvature of order $p$, has been extensively studied in the literature. Note that when $\|\rho\|_{L^p(M)} = 0$, the pointwise condition $\mathrm{Ric} \geq -n$ is recovered. The corresponding result is presented in Theorem~1.3.

In a special case $\rho(x) \leq  \lambda(d(o,x))$, where $\lambda : [0,\infty) \to [0,\infty)$ is a non-increasing continuous function in $L^1([0,\infty))$, and $o \in M$ is a fixed reference point,
we have a pointwise mean curvature comparison involving the term $\exp\left(\int \lambda(\gamma_x(t))\,dt\right)$, where $\gamma_x(t)$ denotes the unit-speed normal geodesics emanating from $x$. This quantity remains finite under the integrability and monotonicity on $\lambda$, yielding the Willmore-type inequality presented in Theorem~1.1.

Before stating our main results, we introduce the notion of the relative volume ratio $\mathrm{RV}(\Omega)$.
Given a bounded domain $\Omega \subset M$ with smooth boundary, we define
\begin{equation}\label{def:RV}
    \mathrm{RV}(\Omega) := \liminf_{r \to \infty} \frac{\mathrm{vol}\{x \in M \,:\, d(x, \Omega) \leq r\}}{\omega_n \int_0^r (\sinh s)^n \, ds},
\end{equation}
where $\omega_n$ denotes the volume of the unit sphere in $\RR^{n+1}.$

The following theorem is the first main result of this paper.

\begin{thm}\label{thm:willmore-asymptotic}
Let $(M, g)$ be a complete non-compact Riemannian manifold of dimension $n+1$. Suppose there exists a base point $o \in M$, and let $\lambda: [0,\infty) \to [0,\infty)$  be a non-increasing continuous function in $L^1([0,\infty))$ such that
$$
\mathrm{Ric}(x) \geq -n - \lambda(d(o,x)),
$$
where $d(o,x)$ denotes the Riemannian distance from $o$ to $x$. Let $\Omega \subset M$ be a bounded domain with smooth boundary. Then
$$
\mathrm{RV}(\Omega) \cdot \omega_n \leq e^{2nb} \int_{\partial \Omega\cap\{H(x)\geq -n-2nb\}} \left(1 + 2b + \frac{1}{n}H(x)\right)^n d\mathrm{vol}_{\partial \Omega}(x),
$$
where $H(x)$ denotes the mean curvature of $\partial \Omega$ (with respect to the outward unit normal) at $x \in \partial \Omega$ and $b = \int_0^\infty \lambda(s)\, ds < \infty$.
\end{thm}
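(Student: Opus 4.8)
The plan is to run the comparison-geometry proof of Willmore-type inequalities: push $\partial\Omega$ outward along normal geodesics, bound the resulting Jacobian by a Riccati comparison that absorbs the curvature deficit $\lambda$, divide by $\omega_n\int_0^r(\sinh s)^n\,ds$, and let $r\to\infty$. Concretely, let $\nu$ be the outward unit normal of $\partial\Omega$, $\gamma_x(t)=\exp_x(t\nu(x))$, $\Phi(x,t)=\gamma_x(t)$, and let $c(x)\in(0,\infty]$ be the cut distance of $\partial\Omega$ along $\gamma_x$. In Fermi coordinates $d\mathrm{vol}=\mathcal J(x,t)\,dt\,d\mathrm{vol}_{\partial\Omega}(x)$ with $\mathcal J(x,\cdot)>0$ on $[0,c(x))$, $\mathcal J(x,0)=1$, and $\partial_t\log\mathcal J(x,t)=H(x,t)$, the mean curvature (with respect to $\partial_t$) of $\{d(\cdot,\Omega)=t\}$ at $\gamma_x(t)$, so that $H(x,0)=H(x)$. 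Since every point of $\{d(\cdot,\Omega)\le r\}\setminus\Omega$ lies on a minimizing normal geodesic of length $\le r$, hence at parameter $\le\min(r,c(x))$, the area formula gives
\begin{equation*}
\mathrm{vol}\{x\in M:d(x,\Omega)\le r\}\;\le\;\mathrm{vol}(\Omega)+\int_{\partial\Omega}\int_0^{\min(r,c(x))}\mathcal J(x,t)\,dt\,d\mathrm{vol}_{\partial\Omega}(x),
\end{equation*}
so everything reduces to bounding $\mathcal J(x,t)$.

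\emph{The Jacobian comparison, the crux.} The Riccati equation for the shape operator, together with $|S|^2\ge\tfrac1n(\mathrm{tr}\,S)^2$ and $\mathrm{Ric}(\gamma_x'(t),\gamma_x'(t))\ge-n-\lambda(d(o,\gamma_x(t)))$, yields $H'(x,t)\le-\tfrac1n H(x,t)^2+n+\lambda(d(o,\gamma_x(t)))$; hence $\phi:=\mathcal J^{1/n}$, with $\phi(0)=1$ and $\phi'(0)=\tfrac1n H(x)$, satisfies the Sturm inequality $\phi''\le\big(1+\tfrac1n\lambda(d(o,\gamma_x(t)))\big)\phi$. The key estimate is that the deficit along any normal geodesic is uniformly integrable: for $s<c(x)$ one has $d(o,\gamma_x(s))\ge|d(o,x)-s|$, so monotonicity of $\lambda$ gives
\begin{equation*}
\int_0^{c(x)}\lambda\big(d(o,\gamma_x(s))\big)\,ds\;\le\;\int_0^{\infty}\lambda\big(|d(o,x)-s|\big)\,ds\;=\;\int_0^{d(o,x)}\lambda+\int_0^{\infty}\lambda\;\le\;2b.
\end{equation*}
Comparing $\phi$ with the hyperbolic model solution of $\psi''=\psi$ — writing $\phi$ against the model through the nonnegative Green's function $\sinh(t-s)$ and running a Gronwall estimate whose error is controlled by the displayed bound — I expect to obtain, for $0\le t<c(x)$,
\begin{equation*}
\mathcal J(x,t)\;\le\;e^{2nb}\Big(\cosh t+\big(2b+\tfrac1n H(x)\big)\sinh t\Big)^{n},
\end{equation*}
and note that when $2b+\tfrac1n H(x)<-1$ the right-hand side vanishes at a finite time, forcing $c(x)<\infty$ and making $\mathcal J(x,\cdot)$ integrable on $[0,c(x))$. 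I expect this step to be the main obstacle: a naive pointwise Riccati comparison only produces something like $H(x,t)\le\bar H(t)+2b$, whose integral contributes an unusable factor $e^{2bt}$, so the comparison must be carried out at the level of $\phi$ against the full model solution — precisely the regime where the standard volume estimates cited in the introduction are too lossy.

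\emph{Passing to the limit.} Put $f_r(x)=\big(\omega_n\int_0^r(\sinh s)^n\,ds\big)^{-1}\int_0^{\min(r,c(x))}\mathcal J(x,t)\,dt$. Dividing the inequality above by $\omega_n\int_0^r(\sinh s)^n\,ds$ and noting the $\mathrm{vol}(\Omega)$ term vanishes, $\mathrm{RV}(\Omega)\le\liminf_{r\to\infty}\int_{\partial\Omega}f_r\,d\mathrm{vol}_{\partial\Omega}$. For fixed $x$: if $c(x)<\infty$ or $2b+\tfrac1n H(x)\le-1$, the inner integral is bounded and $f_r(x)\to0$; otherwise, combining the Jacobian bound with the elementary asymptotics $\int_0^r(\cosh t+c\sinh t)^n\,dt\sim\tfrac1n(\tfrac{1+c}{2})^n e^{nr}$ (for $c>-1$) and $\int_0^r(\sinh s)^n\,ds\sim\tfrac1n 2^{-n}e^{nr}$, one gets $\limsup_{r\to\infty}f_r(x)\le\omega_n^{-1}e^{2nb}\big(1+2b+\tfrac1n H(x)\big)_+^{\,n}$. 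Since $\partial\Omega$ is compact and $H$ is bounded, the same asymptotics (after restricting to $r\ge1$) dominate $f_r$ by a constant independent of $r$, so reverse Fatou applies and
\begin{equation*}
\mathrm{RV}(\Omega)\;\le\;\int_{\partial\Omega}\limsup_{r\to\infty}f_r\,d\mathrm{vol}_{\partial\Omega}\;\le\;\frac{e^{2nb}}{\omega_n}\int_{\partial\Omega\cap\{H\ge-n-2nb\}}\Big(1+2b+\tfrac1n H(x)\Big)^{n}d\mathrm{vol}_{\partial\Omega}(x),
\end{equation*}
which is the assertion (and for $b=0$ it reduces to the Jin--Yin inequality).
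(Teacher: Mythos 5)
Your overall strategy coincides with the paper's: Fermi coordinates and the area formula, the Riccati inequality for the parallel-hypersurface mean curvature reduced to a Sturm-type inequality $\phi''\le(1+\Lambda)\phi$ for $\phi=\mathcal J^{1/n}$, the crucial triangle-inequality observation $\int_0^{\infty}\lambda(d(o,\gamma_x(s)))\,ds\le 2b$ along every normal geodesic, and the passage to the limit against $\sinh^n$ asymptotics. The genuine gap is precisely where you write ``I expect to obtain'': the Jacobian bound is the crux and is left underived, and the route you sketch has a real obstruction. A Gronwall estimate against the model $\cosh t+\tfrac{H(x)}{n}\sinh t$ can only be run on the interval where that model stays positive; for $-n-2nb\le H(x)<-n$ it crosses zero in finite time, yet this is exactly the band where you still need the bound on all of $[0,c(x))$. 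In particular the shift $2b\sinh t$ in your claimed bound does not come out of a Gronwall iteration against the \emph{unshifted} hyperbolic model. The paper instead compares $\phi$ with $\psi:=\psi_2+\tfrac{H(x)}{n}\psi_1$, where $\psi_1,\psi_2$ solve the \emph{perturbed} equation $\psi''=(1+\Lambda)\psi$ with the standard initial data, and proves two separate estimates: $\sinh t\le\psi_1\le e^{\int\Lambda}\sinh t$, and the subtler $\psi_2/\psi_1\le\coth t+\int_0^t\Lambda\,\mathrm{sech}^2$. It is the second estimate that produces the $2b\sinh t$ shift intact and valid for every $H$, and that also drives the finite-cut-distance claim for $H<-n-2nb$ (one shows $\psi$ must vanish, forcing $\psi'/\psi\to-\infty$ and hence $m\to-\infty$ in finite time). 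Your proposal asserts that conclusion but supplies no argument for it.

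There is also a bookkeeping inconsistency you should resolve before the proof closes: your Sturm inequality reads $\phi''\le(1+\tfrac1n\lambda(d(o,\gamma_x(t))))\phi$, so the coefficient being integrated is $\tfrac1n\lambda$, with $\int_0^\infty\tfrac1n\lambda(d(o,\gamma_x(s)))\,ds\le\tfrac{2b}{n}$. Carried through, the comparison yields $\mathcal J(x,t)\le e^{2b}\bigl(\cosh t+(\tfrac{2b}{n}+\tfrac{H(x)}{n})\sinh t\bigr)^n$, not the $e^{2nb}\bigl(\cosh t+(2b+\tfrac{H(x)}{n})\sinh t\bigr)^n$ you wrote down. (The paper's own proof and its Corollaries~\ref{coro:H-intersection}, \ref{coro:H-intersection2} and Theorem~\ref{thm:RV-asymptotic} actually work from $\mathrm{Ric}\ge-n-n\lambda$, which matches the stated conclusion; the $-\lambda$ in the theorem's hypothesis appears to be a slip. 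Either way, your displayed Jacobian estimate is not the one that follows from your own Riccati inequality, which suggests it was written backwards from the target rather than derived.) The reverse-Fatou passage to the limit at the end is fine once the pointwise Jacobian bound is secured.
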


Note that the finiteness of $\mathrm{RV}(\Omega)$ follows from our assumptions in Theorem~\ref{thm:willmore-asymptotic} (see Theorem~\ref{thm:RV-asymptotic}).
We also emphasize that the assumption $\int_0^\infty \lambda(t) \,dt < \infty$ required in our setting is weaker than the condition $\int_0^\infty t\lambda(t) \,dt < \infty$, which commonly arises in the study of manifolds with asymptotically non-negative Ricci curvature (see, for instance, \cite{DLL-sobolev}, \cite{LR24}, and references therein).

\begin{remark}
When $\lambda \equiv 0$ (so that $b = 0$), the Ricci curvature lower bound reduces to $\mathrm{Ric} \geq -n$, and Theorem~\ref{thm:willmore-asymptotic} recovers the inequality
$$
\mathrm{RV}(\Omega) \cdot \omega_n \leq \int_{\partial \Omega\cap\{H(x)\geq -n\}} \left( 1+ \frac{1}{n}H(x)\right)^n \,d\mathrm{vol}_{\partial \Omega}(x),
$$
which was established in \cite[Theorem 1.3]{jin-yin}.
\end{remark}

The following theorem provides a Willmore-type inequality under an integral Ricci curvature condition.

\begin{thm}\label{thm:willmore-integral}
    Let $(M, g)$ be a complete non-compact Riemannian manifold of dimension $n+1$. Let $\Omega \subset M$ be a bounded domain with smooth mean-convex boundary and assume that
    $\mathrm{RV}(\Omega)  < \infty.$
    Let $p > \frac{n+1}{2}$. Then
    $$
        \mathrm{RV}(\Omega) \cdot \omega_n
        \leq \left(1 + \|\rho\|_p^{\frac{1}{2}} \right)
        \int_{\partial \Omega} \left(1 + \frac{H(x)}{n} \right)^n \, d\mathrm{vol}_{\partial \Omega}(x)
        + C(n,p,\|\rho\|_p) \left(1 + \frac{H(\xi)}{n} \right)^n,
    $$
    where $\displaystyle \lim_{\|\rho\|_p \to 0^+} C(n,p,\|\rho\|_p) = 0$, $H(x)$ denotes the mean curvature (with respect to the outward unit normal) at $x \in \partial \Omega$, and $\xi \in \partial \Omega$ is a point at which the mean curvature attains its maximum.
\end{thm}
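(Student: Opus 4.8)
The plan is to run the normal-exponential-map comparison behind \cite[Theorem 1.3]{jin-yin}, carry the error term produced by $\rho$ through the Riccati comparison, and absorb it at the end via Chebyshev's inequality. For $x\in\partial\Omega$ let $\gamma_x$ be the unit-speed geodesic with $\gamma_x(0)=x$ and $\gamma_x'(0)$ the outward unit normal, with cut time $t_{\mathrm{cut}}(x)$ and normal-exponential Jacobian $\mathcal A(x,t)$, extended by $0$ for $t\ge t_{\mathrm{cut}}(x)$. Since every point of $M$ lies on a minimizing normal geodesic from $\partial\Omega$,
\[
\mathrm{vol}\{y:d(y,\Omega)\le r\}\ \le\ \mathrm{vol}(\Omega)+\int_{\partial\Omega}\int_0^r\mathcal A(x,t)\,dt\,d\mathrm{vol}_{\partial\Omega}(x),
\]
and, dividing by $\omega_n\int_0^r(\sinh s)^n\,ds\to\infty$ and taking $\liminf_{r\to\infty}$, the $\mathrm{vol}(\Omega)$ term disappears and we are reduced to bounding $\liminf_{r\to\infty}\big(\int_0^r(\sinh s)^n\,ds\big)^{-1}\int_{\partial\Omega}\int_0^r\mathcal A(x,t)\,dt\,d\mathrm{vol}_{\partial\Omega}(x)$.

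Along $\gamma_x$ the mean curvature $H(x,t)$ of the level tube satisfies the Riccati inequality \eqref{ineq:riccati}, hence by the curvature hypothesis and Cauchy--Schwarz $\partial_t H\le-\tfrac1nH^2+n+\rho(\gamma_x(t))$. Let $\bar H(x,\cdot)$ solve $\partial_t\bar H=-\tfrac1n\bar H^2+n$ with $\bar H(x,0)=H(x)\ge0$ (mean-convexity), so that $\bar{\mathcal A}(x,t):=\exp\int_0^t\bar H(x,s)\,ds=\big(\tfrac{H(x)}{n}\sinh t+\cosh t\big)^n$; since $\partial_t\log\mathcal A=H$ on $[0,t_{\mathrm{cut}})$, with $\psi:=(H-\bar H)_+$ we get $\mathcal A(x,t)\le\bar{\mathcal A}(x,t)\exp\!\big(\int_0^t\psi(x,s)\,ds\big)$. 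Subtracting the two ODEs, $\psi$ obeys $\partial_t\psi+\tfrac2n\bar H\psi\le\rho(\gamma_x(t))$ where $\psi>0$, which is the comparison \eqref{ineq:mean-integral-comp}; integrating this against $\bar{\mathcal A}^{2/n}$ from $\psi(x,0)=0$ and using $\int_s^\infty\bar{\mathcal A}(x,t)^{-2/n}\,dt\le c\,\bar{\mathcal A}(x,s)^{-2/n}$ (with $c$ depending only on $n$ and $\sup_{\partial\Omega}H$), Fubini yields $\int_0^\infty\psi(x,s)\,ds\le c\int_0^\infty\rho(\gamma_x(s))\,ds$. A Hölder estimate along $\gamma_x$ — here is where $p>\tfrac{n+1}{2}$ is used, to make the relevant model weights integrable — together with the Jacobian change of variables $\int_{\partial\Omega}\int_0^{t_{\mathrm{cut}}(x)}\rho^p\,\mathcal A\,ds\,d\mathrm{vol}_{\partial\Omega}\le\|\rho\|_p^p$ then upgrades this to an $L^q(\partial\Omega)$ estimate
\[
\Big\|\,x\mapsto\int_0^\infty\psi(x,s)\,ds\,\Big\|_{L^q(\partial\Omega)}\ \le\ C(n,p)\,\|\rho\|_p
\]
for some $q=q(n,p)>1$; in particular $E(x):=\exp\!\big(\int_0^\infty\psi(x,s)\,ds\big)$ lies in $L^1(\partial\Omega)$, with $\|E\|_{L^s(\partial\Omega)}$ bounded uniformly for $\|\rho\|_p$ small.

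Since $\big(\tfrac{H(x)}{n}\sinh t+\cosh t\big)/\sinh t\to1+\tfrac{H(x)}{n}$, one computes $\big(\int_0^r\bar{\mathcal A}(x,t)\,dt\big)/\big(\int_0^r(\sinh s)^n\,ds\big)\to(1+\tfrac{H(x)}{n})^n$, and for $r\ge1$ these quotients are bounded uniformly in $x$ by a constant depending only on $n$ and $H(\xi)$; combined with $\mathcal A(x,t)\le\bar{\mathcal A}(x,t)E(x)$ and $E\in L^1(\partial\Omega)$, dominated convergence gives
\[
\mathrm{RV}(\Omega)\cdot\omega_n\ \le\ \int_{\partial\Omega}E(x)\Big(1+\tfrac{H(x)}{n}\Big)^n\,d\mathrm{vol}_{\partial\Omega}(x).
\]
To conclude, split $\partial\Omega=A\sqcup B$ with $A=\{x:E(x)\le1+\|\rho\|_p^{1/2}\}$: on $A$ one has $E(x)(1+\tfrac{H(x)}{n})^n\le(1+\|\rho\|_p^{1/2})(1+\tfrac{H(x)}{n})^n$, giving the first term on the right-hand side; on $B$ one uses $H(x)\le H(\xi)$ and $\int_BE\le\mathrm{vol}_{\partial\Omega}(B)^{1-1/s}\|E\|_{L^s(\partial\Omega)}$, where by Chebyshev $\mathrm{vol}_{\partial\Omega}(B)\le\big(\log(1+\|\rho\|_p^{1/2})\big)^{-q}\big\|x\mapsto\int_0^\infty\psi(x,s)\,ds\big\|_{L^q(\partial\Omega)}^q$, which together with the bound of the previous paragraph makes $\int_BE$ bounded by a quantity $C(n,p,\|\rho\|_p)\to0$ as $\|\rho\|_p\to0^+$. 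This produces exactly the term $C(n,p,\|\rho\|_p)(1+\tfrac{H(\xi)}{n})^n$ and proves the theorem.

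I expect the $L^q(\partial\Omega)$ bound for $\int_0^\infty\psi\,ds$ to be the main obstacle: the Jacobian $\mathcal A(x,s)$ relating the along-geodesic and ambient integrations can be small before the cut locus, so one must invoke the Ricci lower bound to keep $\mathcal A$ from collapsing, treat the cut locus carefully (where $\psi\to0$ at conjugate points), and track all constants so that $C(n,p,\|\rho\|_p)\to0$. The hypothesis $\mathrm{RV}(\Omega)<\infty$ enters precisely here, bounding the volume swept by the tubes and hence $\int\rho^p$ over them, and it is also what makes the left-hand side of the inequality finite.
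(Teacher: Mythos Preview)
Your approach diverges from the paper's at the crucial step, and the divergence is a genuine gap. The problem is the passage from the pointwise bound $\int_0^{t_{\mathrm{cut}}(x)}\psi(x,s)\,ds\le c\int_0^{t_{\mathrm{cut}}(x)}\rho(\gamma_x(s))\,ds$ to an $L^q(\partial\Omega)$ bound in terms of $\|\rho\|_p$. To carry this out via H\"older along $\gamma_x$, you must pair $\rho$ against a weight $w$ with $\int_0^{t_{\mathrm{cut}}} w^{-1/(p-1)}<\infty$ \emph{and} $\int_{\partial\Omega}\int \rho^p\,w\le C\|\rho\|_p^p$; the latter forces $w\le C\mathcal A$, and then the former requires a \emph{lower} bound on the true Jacobian $\mathcal A$. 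Under an integral Ricci hypothesis there is none --- $\mathcal A$ can collapse near focal points or in regions of large positive Ricci --- so the ``invoke the Ricci lower bound'' fix you propose in the last paragraph is unavailable. Using the model weight $\bar{\mathcal A}$ instead does not help: it makes the $w^{-1/(p-1)}$ integral finite, but $\int_{\partial\Omega}\int\rho^p\bar{\mathcal A}$ is not controlled by $\|\rho\|_p^p$, since $\bar{\mathcal A}$ can greatly exceed $\mathcal A$. (Incidentally, the model weight $\bar{\mathcal A}^{-1/(p-1)}\sim e^{-nt/(p-1)}$ is integrable for every $p>1$, so this is \emph{not} where the threshold $p>\tfrac{n+1}{2}$ enters.)

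The paper sidesteps this entirely by never attempting to bound $\int_0^\infty\psi$ in any $L^q(\partial\Omega)$. Instead it keeps the error in the Jacobian-weighted form $\int_0^r\phi^{2p}J\,dt$ throughout: an integration-by-parts argument (multiplying the Riccati inequality by $\phi^{2p-2}J$) yields
\[
\int_0^r\phi^{2p}J\,dt\ \le\ \Big(\tfrac{n(2p-1)}{2p-1-n}\Big)^p\int_0^r\rho(\gamma_x(t))^p\,J\,dt,
\]
and it is here, in the sign of $2p-1-n$, that $p>\tfrac{n+1}{2}$ is used. A second H\"older step with the \emph{model} weight $\widehat J^{-1/(2p-1)}$ (finite because $H\ge 0$) gives $(J/\widehat J)^{1/(2p)}\le 1+C(n,p)\big(\int_0^r\phi^{2p}J\big)^{1/(2p)}$, and the elementary inequality $(1+b)^{2p}\le 1+\epsilon+C(\epsilon)\epsilon^{-q}b^{2p}$ (Lemma~\ref{thm:estimate}) replaces your Chebyshev splitting. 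After integrating over $\Sigma$, the right-hand side becomes $\int_\Sigma\int\rho^pJ\le\|\rho\|_p^p$ by the co-area change of variables --- with the actual Jacobian already in place, so no lower bound on $\mathcal A$ is ever needed. Choosing $\epsilon=\|\rho\|_p^{1/2}$ then produces exactly the two terms in the statement.
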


\begin{remark}
    The constant $C(n,p,\|\rho\|_p)$ depends continuously on $\|\rho\|_p \in [0, \infty)$. In particular, as $\|\rho\|_{L^p(M)} \to 0^+$,  Theorem~\ref{thm:willmore-integral} recovers \cite[Theorem 1.3]{jin-yin}.
\end{remark}

We briefly comment on the assumptions in Theorem~\ref{thm:willmore-integral}.
\begin{remark}
\begin{enumerate}
    \item In the current setting, where no pointwise lower bound on Ricci curvature is imposed, we cannot guarantee a priori the finiteness of $\mathrm{RV}(\Omega)$.
    
    \item The condition $p > \frac{n+1}{2}$ in Theorem~\ref{thm:willmore-integral} reflects a common threshold that appears in the literature on integral curvature bounds. Typically, results involving integral Ricci curvature assume  $p > \frac{n}{2}$ , where  $n$  is the dimension of the ambient manifold. This lower bound on $p$  ensures that the integral control on the Ricci curvature is  enough to yield geometric or analytic consequences, such as mean curvature comparison theorem, volume comparison theorem, or heat kernel estimates (see, for example, \cite{ChenWei22,DaiWei04,petersen-wei,LF24Iso}  and the references therein). A counterexample for $p\leq\frac{n}{2}$ can be found in \cite[Proposition 9.2]{Aubry07} and \cite[Appendix]{Gallot88}. In our case, since the ambient manifold  $M$  has dimension  $n+1$, the corresponding condition becomes $p> \frac{n+1}{2}$.
    \item
    In mean curvature comparison theorems under integral Ricci curvature bounds, 
    it is common to restrict attention to regions where the mean curvature is non‑negative (cf. \cite{petersen-wei}). While analogous results can be extended to the regions with negative mean curvature (cf. \cite{Aubry07}), the associated estimates become substantially more technical.
    For simplicity, we assume that $\partial \Omega$ is mean-convex.
\end{enumerate}
\end{remark}

This paper is organized as follows: In section 2, we establish a Willmore-type inequality (Theorem~\ref{thm:willmore-asymptotic}) under asymptotic Ricci curvature conditions.
The proof proceeds by developing new comparison lemmas (Lemma~\ref{lem:comparison1} and \ref{lem:comparison2})
that provide refined estimates for solutions of linear ODEs, improving upon existing bounds in the literature.
These comparison lemmas are used to obtain pointwise mean curvature comparison along normal geodesics, which allows us to control the Jacobian determinant of the normal exponential map.
We also prove that the relative volume ratio $\mathrm{RV}(\Omega)$ is finite under our assumptions (Theorem~\ref{thm:RV-asymptotic}).
Section 3 is devoted to the proof of  Theorem~\ref{thm:willmore-integral}, which handles the more challenging case of integral Ricci curvature bounds.
We first obtain an $L^p$ version of mean curvature comparison in \eqref{ineq:mean-integral-comp}. Then, using Lemma~\ref{thm:estimate} which provides a delicate estimate for $(1+b)^p$, we control the error terms that arise from integral Ricci curvature assumption.
This ultimately leads to a Willmore-type inequality with an error term that vanishes as $\|\rho\|_p \to 0^+$, thereby recovering the classical result under pointwise Ricci curvature bounds.

\vskip 0.3cm
\noindent
{\bf Acknowledgment: }
This work is partially supported by NSF DMS grant 2403557.
The author thanks her advisor Guofang Wei for her helpful discussions and valuable guidance throughout this work.

\section{Willmore inequality with asymptotic Ricci curvature}

In this section, we prove Theorem~\ref{thm:willmore-asymptotic} and establish several related geometric consequences. The proof strategy centers on developing refined comparison techniques for Riccati-type inequality \eqref{ineq:asymp-m} that allow us to control geometric quantities along normal geodesics. 
We begin by establishing two key comparison lemmas (Lemma~\ref{lem:comparison1} and \ref{lem:comparison2}) that provide refined estimates for solutions of second order linear ODEs, improving upon existing bounds in the literature.
These lemmas serve multiple purposes: they are essential for deriving pointwise mean curvature comparison in the proof of Theorem~\ref{thm:willmore-asymptotic} and they also enable us to prove the finiteness of the relative volume ratio $\mathrm{RV}(\Omega)$ in Theorem~\ref{thm:RV-asymptotic}.

A crucial aspect of our analysis involves investigating the relationship between mean curvature and focal radius $\tau(x)$.
This investigation, carried out in a claim within the proof of Theorem~\ref{thm:willmore-asymptotic}, leads to several important geometric consequences. In particular, we derive Corollary~\ref{coro:H-intersection}, which ensures that the integration region $\partial \Omega\cap\{H(x)\geq -n-2nb\}$ in Theorem~\ref{thm:willmore-asymptotic} is non-empty, and Corollary~\ref{coro:H-intersection2}, which provides a compactness criterion based on a lower bound for the mean curvature of the manifold's boundary.

We now prove the following lemmas whose proofs follow the ideas of \cite[Lemma 2.13]{Pigola-vanishing} and \cite[Lemmas 2.5 and 2.6]{DLL-sobolev}. Although similar comparison results appear in the literature (see, for instance, \cite[Lemma 2.6]{DLL-sobolev}), they are not directly applicable in our setting. In particular, applying \cite[Lemma 2.6]{DLL-sobolev} with $G(t)= 1 + \lambda(d(o,\gamma(t)))$ would require $G \in L^1([0,\infty))$, which does not hold under our assumptions. As a result, the corresponding upper bounds become too large for our purposes.
These lemmas provide a more suitable estimate under our assumptions.

\begin{lem}\label{lem:comparison1}
Let $\Lambda : [0, \infty) \to [0, \infty)$ be a function in $L^1([0,\infty))$. Let $\psi_1$ be the solution to the initial value problem
$$
\begin{cases}
\psi_1''(t) = (1 + \Lambda(t)) \psi_1(t), \\
\psi_1(0) = 0, \quad \psi_1'(0) = 1.
\end{cases}
$$
Then for all $t \geq 0$, we have
\begin{enumerate}[(i)]
    \item\label{item:lem-comparison1-1} $\sinh t \leq \psi_1(t) \leq \displaystyle\int_0^t e^{\int_0^s \Lambda(\tau)\, d\tau} \cosh s\, ds$.
    \item\label{item:lem-comparison1-2} $\cosh t \leq \psi_1'(t) \leq e^{\int_0^t \Lambda(\tau)\, d\tau} \cosh t$.
    \item\label{item:lem-comparison1-3} The limit $\lim\limits_{t \to \infty}\frac{\psi_1(t)}{\sinh(t)}$ exists.
\end{enumerate}
\end{lem}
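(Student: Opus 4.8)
The plan is to treat the ODE $\psi_1'' = (1+\Lambda)\psi_1$ as a perturbation of the model equation $\phi'' = \phi$ whose solution with the given initial data is $\sinh t$, and to extract all three statements from a single integral-equation reformulation together with monotonicity/Gronwall arguments. First I would establish the lower bounds. Since $\Lambda \geq 0$, the function $\psi_1$ satisfies $\psi_1'' \geq \psi_1$; comparing with $\sinh t$ (same initial conditions) via a standard ODE comparison — e.g. noting that $w = \psi_1 - \sinh t$ solves $w'' = w + \Lambda \psi_1 \geq w$ with $w(0)=w'(0)=0$, and that $\psi_1 > 0$ on $(0,\infty)$ because $\psi_1'(0)=1>0$ and the equation prevents $\psi_1$ from returning to $0$ — gives $\psi_1(t) \geq \sinh t$ and, after differentiating the integral identity below, $\psi_1'(t) \geq \cosh t$. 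Positivity of $\psi_1$ and $\psi_1'$ on $[0,\infty)$ is the small technical point to nail down first, since it is used repeatedly.

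Next, for the upper bounds, I would integrate the ODE twice to get
$$
\psi_1'(t) = 1 + \int_0^t (1+\Lambda(s))\psi_1(s)\,ds, \qquad \psi_1(t) = \int_0^t \psi_1'(s)\,ds.
$$
Consider $u(t) := \psi_1'(t)/\cosh t$. Using $\psi_1(s) \le \psi_1'(s)$ for... actually the cleaner route: from $\psi_1'' = \psi_1 + \Lambda\psi_1 \le \psi_1'' |_{\text{model}}$ is false in the wrong direction, so instead observe $\psi_1'' - \psi_1 = \Lambda \psi_1 \le \Lambda \psi_1'$ (since $\psi_1(s) = \int_0^s \psi_1' \le \psi_1'(s)\cdot$ ... not quite — rather use $\psi_1(t)\le \sinh t \cdot (\psi_1'/\cosh)$ type bounds). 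The robust approach: set $v = \psi_1' - \psi_1 \cdot \tanh' $... To avoid fragility I would instead use the variation-of-constants formula for $\psi_1'$ against the operator $\frac{d}{dt} - \tanh t$ is messy; the textbook trick (as in \cite[Lemma 2.13]{Pigola-vanishing}) is to note that $f := \psi_1'/\psi_1 $ — or better, work with the Wronskian-type quantity. Concretely, let $h(t) = \psi_1'(t)\cosh t - \psi_1(t)\sinh t$; then $h' = \psi_1''\cosh t - \psi_1 \cosh t = \Lambda(t)\psi_1(t)\cosh t \ge 0$ is not immediately of the needed form either. The approach I would actually commit to: define $g(t) := \psi_1'(t) - \psi_1(t)$; then $g' = \psi_1'' - \psi_1' = \psi_1 + \Lambda\psi_1 - \psi_1' = -g + \Lambda \psi_1$, so $g(t) = e^{-t}\!\left( g(0) + \int_0^t e^{s}\Lambda(s)\psi_1(s)\,ds\right)$, giving control of $\psi_1' - \psi_1$; symmetrically $k := \psi_1' + \psi_1$ satisfies $k' = k + \Lambda\psi_1 \le k(1+\Lambda)$, whence $k(t) \le k(0)\,e^{t + \int_0^t\Lambda}$. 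Combining, $\psi_1'(t) = \tfrac12(k+g) \le \tfrac12 k(t) + \tfrac12 g(t) \le$ (a Gronwall estimate) $\le e^{\int_0^t \Lambda}\cosh t$, and then $\psi_1(t) = \int_0^t \psi_1'(s)\,ds \le \int_0^t e^{\int_0^s \Lambda}\cosh s\,ds$, which is exactly (i) and (ii). I expect the bookkeeping to make the constant come out as stated; if a stray factor appears I would tighten by estimating $\psi_1(s) \le \int_0^s e^{\int_0^\tau \Lambda}\cosh \tau\,d\tau \le e^{\int_0^\infty\Lambda}\sinh s$ and feeding this back.

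Finally, for (iii), existence of $\lim_{t\to\infty}\psi_1(t)/\sinh t$: I would show the ratio $R(t) := \psi_1(t)/\sinh t$ is monotone non-decreasing for $t>0$ and bounded above. Boundedness is immediate from (i), since $R(t) \le \big(\int_0^t e^{\int_0^s\Lambda}\cosh s\,ds\big)/\sinh t \le e^{\int_0^\infty \Lambda} \cdot (\sinh t/\sinh t)$ up to lower-order terms, hence $R$ is bounded by (essentially) $e^{\int_0^\infty\Lambda}$. For monotonicity, compute $R'(t) = \dfrac{\psi_1'(t)\sinh t - \psi_1(t)\cosh t}{\sinh^2 t}$, and let $W(t) := \psi_1'(t)\sinh t - \psi_1(t)\cosh t$ be the (generalized) Wronskian of $\psi_1$ with $\sinh$. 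Then $W(0)=0$ and $W'(t) = \psi_1''\sinh t - \psi_1\sinh t = \Lambda(t)\psi_1(t)\sinh t \ge 0$, so $W \ge 0$ on $[0,\infty)$, giving $R' \ge 0$. A monotone bounded function on $(0,\infty)$ has a limit at infinity, which proves (iii).

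The main obstacle I anticipate is getting the upper-bound constants in (i)–(ii) to land exactly as stated — the precise Gronwall manipulation requires care so that the exponential factor is $e^{\int_0^t\Lambda}$ and not, say, $e^{2\int_0^t\Lambda}$ — and, relatedly, establishing positivity of $\psi_1$ and $\psi_1'$ on all of $[0,\infty)$ cleanly before using it. Everything else (the sign of the Wronskian $W$, monotonicity of $R$, boundedness, and the lower bounds) is a direct computation once positivity is in hand.
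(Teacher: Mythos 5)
Your lower-bound argument and your treatment of part (iii) are essentially the same as the paper's: the paper invokes the second-order ODE comparison lemma of Pigola--Rigoli--Setti with $\phi_1 = \sinh t$ to get $\sinh t \leq \psi_1$ and $\coth t \leq \psi_1'/\psi_1$ (hence $\psi_1' \geq \cosh t$), and for (iii) it uses exactly your Wronskian $W(t)=\psi_1'(t)\sinh t - \psi_1(t)\cosh t$, showing $W'=\Lambda\psi_1\sinh t \geq 0$ so that $\psi_1/\sinh t$ is nondecreasing and bounded by $e^{\int_0^\infty\Lambda}$. So those pieces are fine.

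The genuine gap is in your proposed route to the \emph{upper} bounds via the decomposition $k=\psi_1'+\psi_1$, $g=\psi_1'-\psi_1$. Your identities $k'=k+\Lambda\psi_1$ and $g'=-g+\Lambda\psi_1$ are correct, and the Gronwall bound $k(t)\leq e^{t+\int_0^t\Lambda}$ is fine. But then to conclude $\psi_1'=\tfrac12(k+g)\leq e^{\int_0^t\Lambda}\cosh t$ from separate bounds on $k$ and $g$, you would need $g(t)\leq e^{-t}e^{\int_0^t\Lambda}$, and this is \emph{false} in general. The exact solution is
$$
g(t)=e^{-t}\Bigl(1+\int_0^t e^{s}\Lambda(s)\psi_1(s)\,ds\Bigr),
$$
and since $\psi_1(s)\geq\sinh s\sim\tfrac12 e^{s}$, the integrand behaves like $\tfrac12 e^{2s}\Lambda(s)$, so $\int_0^t e^{s}\Lambda\psi_1\,ds$ can be of order $e^{2t}$ even for $\Lambda\in L^1$ (e.g.\ $\Lambda$ concentrated near $t$). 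Hence $g(t)$ can be of order $e^{t}$, not $O(e^{-t})$, and feeding back the refined bound $\psi_1(s)\leq e^{\int_0^\infty\Lambda}\sinh s$ as you suggest does not change this order. In other words, the $k$ and $g$ channels are coupled through $\Lambda\psi_1$ in a way that makes separate Gronwall bounds too lossy; the true inequality relies on a cancellation you lose by splitting. (A repair that stays close to your spirit: combine $k$ and $g$ back into the variation-of-parameters formula $\psi_1'(t)=\cosh t+\int_0^t\cosh(t-s)\Lambda(s)\psi_1(s)\,ds$, use $\cosh(t-s)/\cosh t\leq 1/\cosh s$ for $0\leq s\leq t$ together with $\psi_1\leq\psi_1'$, and apply Gronwall to $u(t)=\psi_1'(t)/\cosh t$.) The paper instead takes a cleaner route: it exhibits the explicit supersolution $\phi_2(t)=\int_0^t e^{\int_0^s\Lambda}\cosh s\,ds$, verifies by a one-line computation that $\phi_2''\geq(1+\Lambda)\phi_2$ with $\phi_2(0)=0$, $\phi_2'(0)=1$, and invokes the same comparison lemma to get $\psi_1\leq\phi_2$; the bound on $\psi_1'$ then follows by integrating $\psi_1''=(1+\Lambda)\psi_1\leq(1+\Lambda)\phi_2\leq\phi_2''$. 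I would recommend adopting that supersolution argument rather than trying to rescue the $k/g$ split.
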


\begin{proof}
Let $\phi_1(t) = \sinh t$. Then
$
\phi_1''(t) = \phi_1(t)
$
with $\phi_1(0) = 0$ and $\phi_1'(0) = 1$. By applying the second-order linear ODE comparison theorem \cite[Lemma 2.1]{Pigola-vanishing}, we obtain
$$
\coth t \leq \frac{\psi_1'(t)}{\psi_1(t)} \quad \text{and} \quad 0 \leq \sinh t \leq \psi_1(t)
$$
for all $t > 0$. Combining the inequalities above, we deduce that $\psi_1'(t) \geq \cosh t$.

To find an upper bound for $\psi_1$ and $\psi_1'$, define the function
$$
\phi_2(t) := \int_0^t e^{\int_0^s \Lambda(\tau)\, d\tau} \cosh s\, ds
$$
to serve as a supersolution.
A direct computation shows that
$$
\phi_2''(t) = (\Lambda(t) \coth t + 1) \sinh t \cdot e^{\int_0^t \Lambda(\tau)\, d\tau} \geq (\Lambda(t) + 1) \phi_2(t)
$$
with $\phi_2(0) = 0$ and $\phi_2'(0) = 1$.
Applying the comparison theorem \cite[Lemma 2.1]{Pigola-vanishing} again, we conclude that $\psi_1(t) \leq \phi_2(t)$ for all $t \geq 0$. Moreover, the inequality $\psi_1'' (t) \leq \phi_2''(t)$ with $\psi_1'(0) = 1 = \phi_2'(0)$ yields 
$$\psi_1'(t) \leq \phi_2'(t) =  e^{\int_0^t \Lambda (\tau) \, d \tau} \cosh t.$$

To prove the existence of the limit  $\lim\limits_{t \to \infty}\frac{\psi_1(t)}{\sinh(t)}$, observe that
$$(\psi_1'(t) \sinh t - \psi_1(t) \cosh t)' =\Lambda(t) \psi_1(t) \sinh t\geq 0.$$
By our initial condition we have $\psi_1'(t) \sinh t - \psi_1(t) \cosh t \geq 0$ for all $t \geq 0$.
Thus,
$$\left(\frac{\psi_1(t)}{\sinh t}\right)' = \frac{\psi_1'(t) \sinh t - \psi_1(t) \cosh t}{\sinh^2 t} \geq 0.$$
Moreover, Lemma~\ref{lem:comparison1}~\eqref{item:lem-comparison1-1} implies that for $t >0$ we have
\begin{equation}\label{ineq:lem1-exp}
    1 \leq \frac{\psi_1(t)}{\sinh t} \leq \exp\left({\int_0^\infty \Lambda (t) \,dt}\right)<\infty.
\end{equation}
Consequently, the ratio $\frac{\psi_1(t)}{\sinh(t)}$ is monotone increasing and uniformly bounded above, hence the limit $\lim\limits_{t \to \infty}\frac{\psi_1(t)}{\sinh(t)}$ exists.
\end{proof}

\begin{lem}\label{lem:comparison2}
Let $\Lambda : [0, \infty) \to [0, \infty)$ be a function in $L^1([0,\infty))$. Let $\psi_1$ and $\psi_2$ be the solutions to the initial value problems
$$
\begin{cases}
\psi_1''(t) = (1 + \Lambda(t)) \psi_1(t), \\
\psi_1(0) = 0, \quad \psi_1'(0) = 1,
\end{cases}
\qquad
\begin{cases}
\psi_2''(t) = (1 + \Lambda(t)) \psi_2(t), \\
\psi_2(0) = 1, \quad \psi_2'(0) = 0.
\end{cases}
$$
Then, 
\begin{enumerate}[(i)]
    \item $\displaystyle
\frac{\psi_2(t)}{\psi_1(t)} \leq  \coth t + \int_0^t \frac{\Lambda(s)}{\cosh^2 s}\, ds 
$ for all $t > 0$.
    \item\label{item:comp2-item2} The limit $\lim\limits_{t \to \infty}\frac{\psi_2(t)}{\psi_1(t)}$ exists and
    $$\lim\limits_{t \to \infty}\frac{\psi_2(t)}{\psi_1(t)} \leq 1 +\int_0^\infty \frac{\Lambda(s)}{\cosh^2 s}\, ds < \infty. $$
\end{enumerate}
\end{lem}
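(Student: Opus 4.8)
The plan is to reduce both parts to a single elementary hyperbolic inequality. Put $g(t):=\frac{\psi_2(t)}{\psi_1(t)}-\coth t$ for $t>0$. The Wronskian $W:=\psi_2'\psi_1-\psi_2\psi_1'$ satisfies $W'=\psi_2''\psi_1-\psi_2\psi_1''=0$ and $W(0)=-1$, so $\big(\psi_2/\psi_1\big)'=W/\psi_1^2=-1/\psi_1^2$ and hence
$$
g'(t)=\frac{1}{\sinh^2 t}-\frac{1}{\psi_1(t)^2}=\frac{\psi_1(t)^2-\sinh^2 t}{\psi_1(t)^2\sinh^2 t}\ \ge\ 0,
$$
the last step by Lemma~\ref{lem:comparison1}(i). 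Writing $g(t)=\frac{\psi_2(t)\sinh t-\psi_1(t)\cosh t}{\psi_1(t)\sinh t}$ and bounding the numerator near $t=0$ by $C\,t^2\int_0^t\Lambda(s)\,ds$ via the Volterra representation used below (while $\psi_1(t)\sinh t\ge\sinh^2 t\ge t^2$), one gets $g(t)\le C\int_0^t\Lambda\to 0$, so $g(0^+)=0$ and therefore $g(t)=\int_0^t g'(r)\,dr$. Granting (i), part (ii) is then immediate: $g$ is non-decreasing and bounded above by $\int_0^\infty\frac{\Lambda(s)}{\cosh^2 s}\,ds\le\|\Lambda\|_{L^1}<\infty$, so $\lim_{t\to\infty}g(t)$ exists and $\lim_{t\to\infty}\frac{\psi_2(t)}{\psi_1(t)}=1+\lim_{t\to\infty}g(t)\le 1+\int_0^\infty\frac{\Lambda(s)}{\cosh^2 s}\,ds$.

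To bound $g'$ I would use the variation-of-parameters identity $\psi_1(t)=\sinh t+\int_0^t\sinh(t-s)\Lambda(s)\psi_1(s)\,ds$. Factoring $\psi_1(t)^2-\sinh^2 t=(\psi_1(t)-\sinh t)(\psi_1(t)+\sinh t)$ and using $\psi_1(t)+\sinh t\le 2\psi_1(t)$ (again Lemma~\ref{lem:comparison1}(i)) gives
$$
g'(t)\le\frac{2}{\psi_1(t)\sinh^2 t}\int_0^t\sinh(t-s)\Lambda(s)\psi_1(s)\,ds .
$$
Integrating over $[0,t]$, exchanging the order of integration (Tonelli, everything being nonnegative), and invoking that $r\mapsto\psi_1(r)/\sinh r$ is non-decreasing (as shown in the proof of Lemma~\ref{lem:comparison1}), so that $\psi_1(r)^{-1}\le\sinh s\,(\psi_1(s)\sinh r)^{-1}$ for $r\ge s$, the solution-dependent weight $\psi_1(s)$ cancels and one is left with the model integral
$$
g(t)\le\int_0^t 2\Lambda(s)\sinh s\left(\int_s^t\frac{\sinh(r-s)}{\sinh^3 r}\,dr\right)ds .
$$

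It then suffices to prove the pointwise bound $2\sinh s\cosh^2 s\int_s^t\frac{\sinh(r-s)}{\sinh^3 r}\,dr\le 1$ for all $0\le s\le t$, which converts the last display into $g(t)\le\int_0^t\frac{\Lambda(s)}{\cosh^2 s}\,ds$, i.e. part (i). Writing $\sinh(r-s)=\sinh r\cosh s-\cosh r\sinh s$ and integrating termwise, the left-hand side equals
$$
\phi(t):=\cosh^2 s\cosh 2s-2\sinh s\cosh^3 s\coth t+\frac{\sinh^2 s\cosh^2 s}{\sinh^2 t};
$$
one checks $\phi(s)=0$ (using $2\cosh^2 s=1+\cosh 2s$), $\phi'(t)=\dfrac{2\sinh s\cosh^2 s\,\sinh(t-s)}{\sinh^3 t}\ge 0$, and $\phi(\infty)=\cosh^2 s\,e^{-2s}=\tfrac{1}{4}(1+e^{-2s})^2\le 1$, so $0=\phi(s)\le\phi(t)\le\phi(\infty)\le 1$. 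The main obstacle is precisely that any direct Gronwall-type treatment of the integral inequality for $g$ produces a multiplicative factor of the form $e^{c\|\Lambda\|_{L^1}}$, which is fatal since the asserted bound is constant-free; the point that circumvents this is the exact cancellation of the weight $\psi_1$ after the monotonicity substitution, after which only the elementary (but slightly delicate) hyperbolic identity above remains. A minor technical point is the justification of $g(0^+)=0$, handled by the Volterra bound on $\psi_2(t)\sinh t-\psi_1(t)\cosh t$ near the origin.
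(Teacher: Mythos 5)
Your proof is correct, but it takes a genuinely different route from the paper's. The paper uses the constant Wronskian $\psi_2\psi_1'-\psi_1\psi_2'\equiv 1$ to split
$$
\frac{\psi_2}{\psi_1}=\frac{1}{\psi_1\psi_1'}+\frac{\psi_2'}{\psi_1'},
$$
then bounds each summand directly: $\bigl(\psi_2'/\psi_1'\bigr)'=(1+\Lambda)/(\psi_1')^2\le \mathrm{sech}^2 t+\Lambda/\cosh^2 t$ integrates to $\psi_2'/\psi_1'\le\tanh t+\int_0^t\Lambda/\cosh^2 s\,ds$, and $(\psi_1\psi_1')^{-1}\le(\sinh t\cosh t)^{-1}$; adding these gives (i) immediately, and (ii) follows from the paper's observation that $(\psi_2/\psi_1)'=-1/\psi_1^2<0$ makes the ratio monotone decreasing, hence convergent. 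You instead work with $g=\psi_2/\psi_1-\coth t$, establish $g'=\sinh^{-2}t-\psi_1^{-2}\ge 0$ and $g(0^+)=0$, and then bound $\int_0^t g'$ by plugging in the Volterra formula for $\psi_1-\sinh t$, applying Tonelli, and exploiting the monotonicity of $r\mapsto\psi_1(r)/\sinh r$ so that the solution-dependent weight cancels exactly; what remains is the sharp hyperbolic inequality $2\sinh s\cosh^2 s\int_s^t\sinh(r-s)\sinh^{-3}r\,dr\le 1$, which you verify by direct integration. Both arguments use only the lower bounds $\psi_1\ge\sinh t$, $\psi_1'\ge\cosh t$ (equivalently, monotonicity of $\psi_1/\sinh$), so neither needs the exponential upper bounds from Lemma~\ref{lem:comparison1}. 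The paper's proof is shorter and avoids the Volterra/Tonelli machinery and the closed-form hyperbolic computation; yours has the virtue of making transparent exactly why no Gronwall-type exponential factor appears (the $\psi_1$-cancellation), and as a byproduct yields the complementary lower bound $\psi_2/\psi_1\ge\coth t$, which the paper does not record. One small point worth tightening: in justifying $g(0^+)=0$ you should note that the Volterra bound on the numerator gives $|g(t)|\lesssim\int_0^t\Lambda(s)\,ds$, which tends to $0$ by absolute continuity of the Lebesgue integral even though $\Lambda$ is only assumed $L^1$ and need not be bounded near $0$.
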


\begin{proof}
From the differential equations, we observe that
$$
(\psi_2 \psi_1' - \psi_1 \psi_2')' = \psi_2 \psi_1'' - \psi_1 \psi_2'' \equiv 0.
$$
Hence, by the initial conditions,
\begin{equation}\label{eq:lem-psi}
\psi_2(t)\psi_1'(t) - \psi_1(t)\psi_2'(t) \equiv 1.
\end{equation}
From equation \eqref{eq:lem-psi}, we compute
\begin{equation}\label{eq:lem-comp2-temp1}
    \frac{\psi_2}{\psi_1} = \frac{1 + \psi_1 \psi_2'}{\psi_1 \psi_1'} = \frac{1}{\psi_1 \psi_1'} + \frac{\psi_2'}{\psi_1'}.
\end{equation}
To estimate the second term, differentiate $\psi_2' / \psi_1'$:
\begin{equation}\label{eq:lem-comp2-temp2}
    \left( \frac{\psi_2'}{\psi_1'} \right)' = \frac{(1 + \Lambda(t))(\psi_2 \psi_1' - \psi_2' \psi_1)}{(\psi_1')^2} = \frac{1 + \Lambda(t)}{(\psi_1')^2},
\end{equation}
where we used equation \eqref{eq:lem-psi} in the last step.
Combining the inequality Lemma~\ref{lem:comparison1}~\eqref{item:lem-comparison1-2} and the identity \eqref{eq:lem-comp2-temp2}, we obtain
$$
\left( \frac{\psi_2'}{\psi_1'} \right)' \leq \frac{1 + \Lambda(t)}{\cosh^2 t} = \mathrm{sech}^2 t + \frac{\Lambda(t)}{\cosh^2 t}.
$$
Integrating both sides over $[0, t]$ yields
\begin{equation}\label{ineq:lem2-est1}
    \frac{\psi_2'(t)}{\psi_1'(t)} \leq \tanh t + \int_0^t \frac{\Lambda(s)}{\cosh^2 s}\, ds.
\end{equation}
For the remaining term, again by Lemma~\ref{lem:comparison1}, we have
\begin{equation}\label{ineq:lem2-est2}
\frac{1}{\psi_1(t) \psi_1'(t)} \leq \frac{1}{\sinh t \cosh t}.    
\end{equation}
Combining the above estimates \eqref{eq:lem-comp2-temp1}, \eqref{ineq:lem2-est1}, and \eqref{ineq:lem2-est2}, we conclude that
\begin{equation}\label{ineq:lem2-psi2psi1}
\frac{\psi_2(t)}{\psi_1(t)} \leq \tanh t + \int_0^t \frac{\Lambda(s)}{\cosh^2 s}\, ds + \frac{1}{\sinh t \cosh t} = \coth t  + \int_0^t \frac{\Lambda(s)}{\cosh^2 s}\, ds.    
\end{equation}
We now want to consider the limit of $\frac{\psi_2}{\psi_1}(t)$.
Note that
\begin{equation}\label{eq:lem-comp2-temp3}
    \left(\frac{1}{\psi_1\psi_1'}\right)'  = \frac{- (\psi_1')^2- (1+\Lambda(t))\psi_1^2}{(\psi_1\psi_1')^2}.
\end{equation}
By the identities \eqref{eq:lem-comp2-temp1}, $\eqref{eq:lem-comp2-temp2}$, and \eqref{eq:lem-comp2-temp3}, we have
$$\left(\frac{\psi_2}{\psi_1}\right)' = \frac{- (\psi_1')^2- (1+\Lambda(t))\psi_1^2}{(\psi_1\psi_1')^2} + \frac{1+\Lambda(t)}{(\psi_1'(t))^2} = - \frac{1}{(\psi_1(t))^2} <0.$$
Since $\frac{\psi_2}{\psi_1}(t)$ is nonnegative and monotone decreasing for all $t > 0$, the limit $\lim\limits_{t \to \infty}\frac{\psi_2}{\psi_1}(t)$ exists.
Moreover, the inequality \eqref{ineq:lem2-psi2psi1} implies that
$$\lim_{t \to \infty }\frac{\psi_2}{\psi_1} \leq 1 +  \int_0^\infty \frac{\Lambda(s)}{\cosh^2 s}\, ds < \infty.$$
\end{proof}

The following comparison lemma is a slight modification of \cite[Lemma 2.5]{DLL-sobolev}, formulated so that the endpoint $a$ may be finite or infinite. For completeness, we state it here in the form that will be used in the proof of Theorem~\ref{thm:willmore-asymptotic}.

\begin{lem}[{\cite[Lemma 2.5]{DLL-sobolev}}]\label{lem:G}
    Let $G$ be a continuous function on $[0,a)$ and let $\phi,\psi \in C^2([0,a))$ be solutions of the following problems
    $$\begin{cases}
        \phi'' \leq G \phi, \quad t \in (0, a),\\
        \phi(0) =1, \phi'(0) = b,
    \end{cases} \quad \begin{cases}
        \psi'' \geq G \psi, \quad t \in (0, a),\\
        \psi(0) =1, \psi'(0) = b,
    \end{cases}$$
    where $a,b$ are constants and $0 < a\leq +\infty$.
    If $\phi (t) >0$ for $t \in (0,T) \subseteq (0, a)$, then $\psi(t) >0$ in $(0, T)$ and
    $$\frac{\phi'}{\phi} \leq \frac{\psi'}{\psi} \quad \mbox{ and } \quad \phi \leq \psi \quad \mbox{ on } (0,T).$$
\end{lem}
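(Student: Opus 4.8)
This is a standard Sturm-type comparison, and the plan is to control the Wronskian-like function $W(t) := \psi'(t)\phi(t) - \psi(t)\phi'(t)$. Since $\phi(0)=\psi(0)=1$ and $\phi'(0)=\psi'(0)=b$, we have $W(0)=0$. Differentiating, $W' = \psi''\phi - \psi\phi''$, and using the differential inequalities one gets $W' \ge G\psi\phi - G\psi\phi = 0$ at any point where both $\phi \ge 0$ and $\psi \ge 0$ (multiply $\psi'' \ge G\psi$ by $\phi\ge 0$, multiply $\phi'' \le G\phi$ by $\psi\ge 0$, and subtract). So once positivity of both functions is available, $W$ is non-decreasing, hence $W \ge 0$; dividing $W\ge 0$ by $\phi\psi>0$ yields $\phi'/\phi \le \psi'/\psi$, while $(\psi/\phi)' = W/\phi^2 \ge 0$ together with $(\psi/\phi)(0)=1$ yields $\phi \le \psi$. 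Thus everything reduces to propagating positivity of $\psi$ on $(0,T)$.

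To do that, first note $\phi(0)=1>0$ and $\phi>0$ on $(0,T)$, so $\phi>0$ on $[0,T)$. Suppose for contradiction that $\psi$ vanishes somewhere in $(0,T)$, and let $t_0$ be its smallest zero there, so $\psi>0$ on $(0,t_0)$ and $\psi(t_0)=0$. On $(0,t_0)$ both $\phi$ and $\psi$ are positive, hence $W'\ge 0$ there, and with $W(0)=0$ this gives $W\ge 0$ on $[0,t_0]$. Consequently $\psi/\phi$ is non-decreasing on $(0,t_0)$ and, by continuity of $\phi$ and $\psi$, on $[0,t_0]$, so $\psi(t_0)/\phi(t_0) \ge (\psi/\phi)(0) = 1$. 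Since $\phi(t_0)>0$ (because $t_0\in(0,T)$), this forces $\psi(t_0)\ge\phi(t_0)>0$, contradicting $\psi(t_0)=0$. Hence $\psi>0$ on all of $(0,T)$.

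With $\phi,\psi>0$ throughout $(0,T)$, the estimate $W'\ge 0$ holds on $(0,T)$, so $W(t)\ge W(0)=0$ for $t\in[0,T)$, and the two claimed inequalities follow exactly as in the first paragraph.

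I do not expect a genuine obstacle here, since the lemma is elementary. The one point requiring care is the minimal-zero argument: one must ensure the ratio monotonicity extends up to and including $t_0$ (via continuity) and that $\phi(t_0)$ is strictly positive, which is precisely why the hypothesis is phrased in terms of the positivity interval $(0,T)$ of $\phi$. The extension allowing $a=+\infty$ needs nothing new, as every step is local in $t$ and uses only continuity of $G$ on $[0,a)$.
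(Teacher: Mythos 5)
Your proof is correct. Note that the paper does not actually prove this lemma: it is quoted verbatim from \cite[Lemma 2.5]{DLL-sobolev} and stated only for completeness. Your Wronskian-based Sturm comparison --- showing $W=\psi'\phi-\psi\phi'$ has $W(0)=0$ and $W'\ge 0$ wherever $\phi,\psi\ge 0$, then bootstrapping positivity of $\psi$ via a first-zero contradiction using the monotonicity of $\psi/\phi$ --- is the standard route for this kind of statement and is consistent with the Wronskian identities the paper itself uses (e.g.\ $(\psi_2\psi_1'-\psi_1\psi_2')'\equiv 0$ in the proof of Lemma~\ref{lem:comparison2}), so there is nothing to flag.
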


We are now ready to prove Theorem~\ref{thm:willmore-asymptotic}.
\begin{proof}[Proof of Theorem \ref{thm:willmore-asymptotic}]
    Let $\Sigma := \partial \Omega$ and $\nu$ be the outward unit normal vector field on $\Sigma$.
    For each $x \in \Sigma$, consider the normal geodesic $\gamma_x(t) := \exp_x (t  \nu (x))$. Define the focal radius of $\Sigma$ at $x \in \Sigma$ by
    \begin{equation}\label{def:focalRadius}
        \tau(x) := \sup\{L>0\,|\, \gamma_x(t) \text{ is minimizing on } [0,L]\}.
    \end{equation}
    Then $0< \tau (x) \leq \infty$ and the function $\tau(x)$ is continuous on $\Sigma$.
    The cut locus of $\Sigma$ is given by
    $$\mathrm{Cut}(\Sigma)=\{\exp_x(\tau(x) \nu(x))\,|\, x \in \Sigma\}.$$
    Then the map
    $$\Phi : \{(x,r) \, | \, x \in \Sigma, 0\leq r <\tau (x)\} \to ( M \setminus \Omega) \setminus \mathrm{Cut}(\Sigma), \quad \Phi(x,r) = \exp_x (r \nu(x))$$
    is a diffeomorphism. 
    
    To apply the area formula, we estimate $\mathrm{det}(D\Phi)(x,r)$. Fix a point $(x,r)$ in the domain of $\Phi$.
    Let $\{e_1,\ldots, e_n\}$ be an orthonormal basis of $T_x \Sigma$.
    For each $1 \leq i \leq n$,
    define a vector field $X_i(t)$ along the normal geodesic $\gamma_x(t)$ by
    \begin{align*}
        X_i(t)  := D\Phi(e_i)(x,t)= \left.\frac{\partial}{\partial s}\right|_{s=0}\exp_{\exp_x^{\Sigma} (s e_i)}(t  \nu(\exp_x^{\Sigma} (s  e_i)))
    \end{align*}
    for $0 \leq t < \tau(x)$.
     Then $X_i(t)$ is a Jacobi field along  $\gamma_x(t)$ with initial conditions
    \begin{align*}
        X_i(0) = e_i, \quad 
        X_i'(0)= \nabla_{e_i}\nu(x).
    \end{align*}
    Moreover, $X_i(t)$ remains orthogonal to $\gamma'(t)$ for all $t \in (0, \tau(x))$, as it arises from a normal variation of geodesics.
    Define an another Jacobi field $X_{n+1}(t)$ along $\gamma_x(t)$ by $X_{n+1}(t) = \gamma_x'(t)$ for $0 \leq t < \tau(x)$.
     
    For each $ 1 \leq i \leq n$, let $E_i(t)$ denote the parallel transport of $e_i$ along $\gamma_x(t)$ for $0 \leq t \leq r$ and define $E_{n+1}(t) := \gamma_x'(t)$. Then $\{E_i (t)\}_{i=1}^{n+1}$ forms a basis of $T_{\gamma_x(t)}M$. Set $X_{ij}(t) = \langle X_i(t), E_j(t)\rangle$. 
    For $1 \leq i \leq n$, we have
    $$X_{n+1, i}(t) = 0, \quad X_{i,n+1}(t) =0, \quad X_{n+1,n+1}(t) = 1.$$
    This implies that $\det D\Phi$ reduces to the determinant of its $n \times n$ tangent block due to the block-diagonal structure:
    $$
        \det D\Phi = \det(X_{ij})_{(n+1)\times (n+1)} = \det(X_{ij})_{n \times n}.
    $$
    
    Since $X_i(t)$ is a Jacobi field, it satisfies
    $$X_i''(t) + R(X_i(t), \gamma_x'(t))\gamma_x'(t)=0.$$
    Then we obtain
    \begin{align*}
        0 = X_{ij}''(t) + \langle R(E_k(t), \gamma_x'(t))\gamma_x'(t), E_j(t)\rangle X_{ik}(t).
    \end{align*}
    Let $X(t)$ and $S(t)$ be $n\times n$ matrices whose entries are given by $X_{ij}(t)$ and $\langle R(E_k(t), \gamma_x'(t))\gamma_x'(t), E_j(t)\rangle$, respectively. Then, in matrix notation, the equation becomes $$X''(t) + X(t) S(t)= 0.$$
    Define $Y(t) := X^{-1}(t) X'(t).$ Then
    $$Y' (t) =  -S(t) - (Y(t))^2,$$
    with an initial condition $Y(0) = X'(0).$
    Define $m(t) := \mathrm{tr} (Y(t))$. Then
    \begin{equation}\label{ineq:asymp-m}
        m'(t) = - \mathrm{tr}S(t) - \mathrm{tr}(Y(t)^2) \leq  -\mathrm{Ric}(\gamma_x'(t), \gamma_x'(t)) - \frac{1}{n}(m(t))^2,
    \end{equation}
    where the inequality follows from $\mathrm{tr}(A^2) \geq \frac{1}{n}(\mathrm{tr}A)^2$ for a $n\times n$ symmetric matrix.
    The initial condition is
    $m(0)= H(x),$ where $H(x)$ denotes the mean curvature of $\Sigma$ at $x$. Geometrically, $m(t)$ represents the mean curvature of the hypersurface $\Sigma_{t} := \{\Phi(x,t) \, | \, x \in \Sigma\}$.

    From the Ricci curvature lower bound, it follows that
    $$m'(t) + \frac{1}{n}(m(t))^2 \leq - \mathrm{Ric}(\gamma_x'(t), \gamma_x'(t)) \leq n+n\lambda(d(o,\gamma_x(t))).$$
    Set $g(t) := \frac{1}{n}m(t)$. Then
    the inequality becomes
    $$g'(t) + g^2(t)  \leq 1 + \lambda(d(o,\gamma_x(t))), \quad g(0) = \frac{1}{n}H(x).$$
    Define $$\phi(t) =\exp\left({\int_0^tg(s) \,ds}\right), \quad 0 \leq t < \tau(x).$$ Then
    $$\phi''(t)  \leq \phi (t) \left( 1 + \lambda(d(o,\gamma_x(t)))\right).$$
    The initial conditions are
    $\phi(0) = 1$, $\phi'(0) =  \frac{1}{n}H(x)$.
    Let $\psi_1(t)$ and $\psi_2(t)$ denote the solutions to the initial value problems:
    \begin{equation}\label{df:psi1psi2}
        \begin{cases}
        \psi_1'' (t) =   \left( 1 +\lambda(d(o,\gamma_x(t)))\right) \psi_1(t),\\
        \psi_1(0) = 0, \quad \psi_1'(0) =1,
    \end{cases} \quad \begin{cases}
        \psi_2'' = \left( 1 +\lambda(d(o,\gamma_x(t)))\right) \psi_2(t),\\
        \psi_2 ( 0 ) = 1, \quad \psi_2'(0) = 0.
    \end{cases}
    \end{equation}
    The existence and uniqueness of $\psi_1$ and $\psi_2$ are guaranteed by linear ODE theory.
    A function $\Lambda(t) = \lambda(d(o,\gamma_x(t)))$ is $L^1([0,\infty))$ because
    $$\int_0^\infty \Lambda(t) \, dt \leq \int_0^\infty \lambda(|d(o,x) - t|)\,dt =\lim_{s \to \infty} \left( \int_0^{d(o,x)} \lambda(u)\,du + \int_0^{s-d(o,x)} \lambda(u) \, du \right) \leq 2b,$$
    where we used the triangle inequality and monotonicity of the function $\lambda(t)$ in the first inequality and $b = \int_0^\infty \lambda(s) \, ds<\infty$.
    By Lemmas \ref{lem:comparison1} and  \ref{lem:comparison2} with $\Lambda(t) = \lambda(d(o,\gamma_x(t)))$, it follows that
    \begin{equation}\label{ineq:psi-fraction}
        \frac{\psi_2}{\psi_1}(t) \leq \coth t + \int_0^t \frac{\lambda(d(o,\gamma_x(s)))}{\cosh^2 s}\, ds  \leq \coth t + 2b,
    \end{equation}
    and
    \begin{align}\label{ineq:psi1}
        \psi_1 (t)  \leq \int_0^t \exp\left({\int_0^s  \lambda(d(o,\gamma_x(\tau))) \, d \tau } \right) \cosh s\, ds \leq e^{2b}\sinh t.
    \end{align}
    Define \begin{equation}\label{df:psi}
        \psi(t) := \psi_2(t) + \frac{1}{n}H(x) \psi_1(t).
    \end{equation} Then 
    $$\psi''(t) = (1+\lambda(d(o,\gamma_x(t))))\psi(t), \quad 
    \psi(0) = 1 ,  \quad  \psi'(0) = \frac{1}{n}H(x).$$
    By Lemma~\ref{lem:G}, it follows that $\psi(t) \geq 0$ and 
    \begin{equation}\label{ineq:g(t)bound}
        \frac{1}{n}m(t) =g(t) = \frac{\phi'}{\phi} \leq \frac{\psi'}{\psi} \quad \mbox{ on } (0, \tau(x)).
    \end{equation}
    Thus,
    \begin{equation}\label{eq:detxt-monotone}
        \frac{d}{dt} \log(\mathrm{det} X(t)) = \operatorname{tr}(Y(t)) = m(t) \leq n \cdot\frac{\psi' }{\psi} = \frac{d}{dt}\log (\psi(t)^n).
    \end{equation}
    By integrating the inequality \eqref{eq:detxt-monotone} and combining the inequalities \eqref{ineq:psi-fraction} and \eqref{ineq:psi1}, we obtain
    \begin{align}
        \mathrm{det}X(t) &\leq \psi (t)^n = \left(\psi_2(t) + \frac{1}{n}H(x) \psi_1(t)\right)^n = \left( \frac{\psi_2(t)}{\psi_1(t)} + \frac{1}{n}H(x)\right)^n \psi_1(t)^n\nonumber\\
        & \leq \left( \cosh t + \left(2b + \frac{1}{n}H(x)\right)\sinh t\right)^ne^{2nb} \label{ineq:det-estimation}
    \end{align}
    for all $0< t < \tau (x)$.

    Define 
    \begin{equation}\label{def:jxt}
        J(x,t) := \begin{cases}
        |\mathrm{det} X(t)|, & \mbox{if }t < \tau(x),\\
        0, & \mbox{if }t \geq \tau (x).
    \end{cases}
    \end{equation}
    \paragraph{Claim.} If $H(x) < - n - 2nb$, then $\tau(x) <\infty$.
    \begin{proof}[Proof of claim]
        Assume, for contradiction, that $\tau(x) = \infty$. Then the mean curvature $m(t)$ of the parallel hypersurfaces $\Sigma_t$ is well-defined for all $t>0$.
        Since
        $$0 < - \frac{1}{2b + \frac{1}{n}H(x)} < 1,$$
        there exists $t_0>0$ such that $$\tanh (t_0) > - \frac{1}{2b + \frac{1}{n}H(x)}.$$
        Then
        $$\coth(t_0) + 2b+ \frac{1}{n}H(x) <0.$$
        Recall that 
        \begin{align*}
            \psi(t_0) & = \left(\frac{\psi_2(t_0)}{\psi_1(t_0)} + \frac{1}{n}H(x)\right) \psi_1 (t_0).
        \end{align*}
        Since $\psi_1 (t_0) >0$, the inequality \eqref{ineq:psi-fraction} implies
        $$\psi (t_0) \leq \left(\coth t_0 + 2b + \frac{1}{n} H(x)\right) \psi_1(t_0)<0.$$
        On the other hand, since $\psi(0) = 1$, it follows that there exists $t_1\in (0, t_0)$ such that $\psi(t_1) = 0$, $\psi(t) >0$ for all $0\leq t<t_1$, and $\psi'(t_1) <0$. Therefore,
        $$\lim_{t \to t_1^-} \frac{\psi'(t)}{\psi(t)} = -\infty.$$
        From the inequality \eqref{ineq:g(t)bound}, we conclude that 
        $$\lim_{t \to t_1^-} m(t) \leq \lim_{t \to t_1^-} n \cdot \frac{\psi'(t)}{\psi(t)} =-\infty.$$
        This shows that $m(t) \to -\infty$ as $t \to t_1^-$, contradicting the assumption that the mean curvature $m(t)$ of $\Sigma_t$ remains well-defined for all $t \geq 0$.
    \end{proof}

    From the claim, we observe that
    \begin{align*}
        0 &\leq \lim_{r \to \infty} \frac{1}{r} \int_{\Sigma \cap \{H(x) < - n -2nb\}}\int_0^r J(x,t)\, dt \, d \mathrm{vol}_\Sigma (x)\\
        & = \lim_{r \to \infty} \frac{1}{r} \int_{\Sigma \cap \{H(x) < - n -2nb\}}\int_0^{\tau(x)} |\mathrm{det}X(t)|\, dt \, d \mathrm{vol}_\Sigma (x)=0.
    \end{align*}
    Thus,
    $$\int_{\Sigma \cap \{H(x) < - n -2nb\}}\int_0^r J(x,t)\, dt \, d \mathrm{vol}_\Sigma (x) = o(r) \quad \text{ as } r \to \infty.$$

    Since the map $$\Phi : \{(x,r) \, | \, x \in \Sigma, 0\leq r <\tau (x)\} \to ( M \setminus \Omega) \setminus \mathrm{Cut}(\Sigma)$$ is a diffeomorphism, we have
    {\allowdisplaybreaks
    \begin{align*}
        &\mathrm{vol}(\{x \in M \setminus \Omega \, | \, d(x, \Omega) \leq r\})\\
        & = \int_\Sigma \int_{0}^{\min\{r,\tau (x)\}} |\mathrm{det}D\Phi (x,t)| \,\mathrm{d}t \, d\mathrm{vol}_\Sigma (x) \\
        & \leq \int_\Sigma \int_{0}^{r} J(x,t) \,\mathrm{d}t \, d\mathrm{vol}_\Sigma (x)\\
        & = \int_{\Sigma\cap\{H(x)\geq -n-2nb\}} \int_{0}^{r} J(x,t) \,\mathrm{d}t \, d\mathrm{vol}_\Sigma (x) + \int_{\Sigma\cap\{H(x)< -n-2nb\}} \int_{0}^{r} J(x,t) \,\mathrm{d}t \, d\mathrm{vol}_\Sigma (x) \\
        & \leq \int_{\Sigma\cap\{H(x)\geq -n-2nb\}} \int_{0}^{r}  \left[\left( \cosh t+\left( 2b + \frac{1}{n}H(x)\right)\sinh t\right)^n e^{2nb} \right] \,\mathrm{d}t \, d\mathrm{vol}_\Sigma (x) + o(r)\\
        & =\int_{\Sigma\cap\{H(x)\geq -n-2nb\}} \left[e^{2nb}\left(1 + 2b  + \frac{1}{n}H(x)\right)^n\frac{e^{nr}}{n \cdot 2^n} + O\left(e^{(n-2)r}\right)\right]  \, d\mathrm{vol}_\Sigma (x)+ o(r)\\
        & = e^{2nb} \cdot \frac{e^{nr}}{n \cdot 2^n} \int_{\Sigma\cap\{H(x)\geq -n-2nb\}} \left(1 + 2b  + \frac{1}{n}H(x)\right)^n d\mathrm{vol}_\Sigma (x) + O\left(e^{(n-2)r}\right) + o(r).
    \end{align*}}
    Dividing both sides by $\frac{e^{nr}}{n \cdot 2^n}$ and taking the limit as $r \to \infty$, we obtain
    $$\mathrm{RV}(\Omega) \cdot \omega_n \leq e^{2nb}\int_{\Sigma\cap\{H(x)\geq -n-2nb\}} \left(1 + 2b  + \frac{1}{n}H(x)\right)^n d\mathrm{vol}_\Sigma (x).$$
\end{proof}

By the claim in the above proof, we have the following corollary.

\begin{cor}\label{coro:H-intersection}
    Let $M^{n+1}$ be a complete non-compact Riemaninan manifold with 
    $$\mathrm{Ric}(x) \geq -n -n\lambda(d(o,x)),$$
    where $\lambda:[0,\infty) \to [0,\infty)$ is a monotone decreasing continuous function and $b = \int_0^\infty \lambda (t) \, dt < \infty$.
    Let $\Omega$ be a bounded domain with smooth boundary.
    Then
    $$\{ x \in \partial \Omega \mid H(x) \geq -n - 2nb \}\neq \emptyset.$$
\end{cor}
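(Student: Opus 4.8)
The plan is to argue by contradiction, invoking the \emph{Claim} established inside the proof of Theorem~\ref{thm:willmore-asymptotic}. First I would note that the hypotheses here are exactly those under which that claim was proved: there is a base point $o\in M$, a monotone decreasing continuous $\lambda\in L^1([0,\infty))$ with $\mathrm{Ric}(x)\ge -n-n\lambda(d(o,x))$, and $b=\int_0^\infty\lambda(t)\,dt<\infty$. Hence the claim is available: whenever $x\in\partial\Omega$ satisfies $H(x)<-n-2nb$, the focal radius $\tau(x)$ defined in \eqref{def:focalRadius} is finite.

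Next, suppose toward a contradiction that $\{x\in\partial\Omega\mid H(x)\ge -n-2nb\}=\emptyset$, i.e.\ $H(x)<-n-2nb$ for every $x\in\partial\Omega$. Then by the claim $\tau(x)<\infty$ for all $x\in\partial\Omega$. Since $\Omega$ is a bounded domain with smooth boundary, $\Sigma:=\partial\Omega$ is compact, and (as recalled in the proof of Theorem~\ref{thm:willmore-asymptotic}) $\tau$ is continuous on $\Sigma$; therefore $T:=\max_{x\in\Sigma}\tau(x)<\infty$.

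The final step is a compactness contradiction. Recall from the proof of Theorem~\ref{thm:willmore-asymptotic} that $\Phi(x,r)=\exp_x(r\nu(x))$ is a diffeomorphism from $\{(x,r)\mid x\in\Sigma,\ 0\le r<\tau(x)\}$ onto $(M\setminus\Omega)\setminus\mathrm{Cut}(\Sigma)$, and that $\mathrm{Cut}(\Sigma)=\{\exp_x(\tau(x)\nu(x))\mid x\in\Sigma\}\subseteq M\setminus\Omega$. Combining these gives
$$
M\setminus\Omega=\bigl\{\exp_x(r\nu(x))\mid x\in\Sigma,\ 0\le r\le\tau(x)\bigr\},
$$
which is the image of the compact set $\{(x,r)\mid x\in\Sigma,\ 0\le r\le\tau(x)\}\subseteq\Sigma\times[0,T]$ under the continuous map $(x,r)\mapsto\exp_x(r\nu(x))$, hence compact. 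Since $\overline{\Omega}$ is compact as well, $M=\overline{\Omega}\cup(M\setminus\Omega)$ would be compact, contradicting the assumption that $M$ is non-compact. Therefore $\{x\in\partial\Omega\mid H(x)\ge -n-2nb\}\neq\emptyset$.

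The only step requiring a little care — and the mild obstacle here — is the identity $M\setminus\Omega=\{\exp_x(r\nu(x))\mid x\in\Sigma,\ 0\le r\le\tau(x)\}$; it follows directly from the diffeomorphism property of $\Phi$ and the description of $\mathrm{Cut}(\Sigma)$ stated in the proof of Theorem~\ref{thm:willmore-asymptotic}, together with $\mathrm{Cut}(\Sigma)\subseteq M\setminus\Omega$, and everything else is elementary point-set topology.
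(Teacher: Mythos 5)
Your proof is correct and follows essentially the same approach as the paper's: argue by contradiction, invoke the Claim from the proof of Theorem~\ref{thm:willmore-asymptotic} to get $\tau(x)<\infty$ for all $x\in\partial\Omega$, and deduce that $M\setminus\Omega$ is bounded via the normal exponential map, contradicting non-compactness of $M$. You merely spell out the boundedness step (using continuity of $\tau$ and compactness of $\Sigma$ to obtain a uniform bound $T$) a bit more explicitly than the paper does.
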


\begin{proof}
    Suppose, for contradiction, that $H(x) < -n - 2nb$ for every $x \in \partial \Omega$. Then, by the claim proved earlier, $\tau(x) < \infty$ for all $x \in \partial \Omega$. Since the map
    $$\Phi:\{(x,r) \,| \, x \in \partial M , 0 \leq r< \tau (x)\} \to (M\setminus\Omega) \setminus \mathrm{Cut}(\partial M),$$
    defined by $\Phi(x,r) = \exp_x (r \nu(x))$, is a diffeomorphism onto its image, $M \setminus \Omega$ is bounded. This contradicts the assumption that $M$ is non-compact.
\end{proof}

The following result is a consequence of Corollary~\ref{coro:H-intersection}, and generalizes \cite[Proposition 2]{Galloway} and  \cite[item (3) of Theorem C]{kaxue}.

\begin{cor}\label{coro:H-intersection2}
    Let $M^{n+1}$ be a complete Riemannian manifold with smooth boundary $\partial M$.
  Suppose there exists a monotone decreasing continuous function $\lambda:[0,\infty) \to [0,\infty)$ such that $b = \int_0^\infty \lambda(s) \, ds <\infty$ and
    $$\mathrm{Ric}(x) \geq -n - n\lambda(d(o,x)),$$
    for some base point $ o \in M$.
    If $\partial M$ is compact and $H(x) > n + 2nb$ for all $x \in \partial M$, then $M$ is compact.
\end{cor}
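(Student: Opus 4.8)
The plan is to adapt the proof of Corollary~\ref{coro:H-intersection}, flowing normal geodesics from $\partial M$ \emph{into} $M$ and showing that the resulting normal exponential map sweeps out all of $M$ starting from a compact parameter domain.

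Let $\tilde\nu$ be the unit normal field on $\partial M$ pointing into $M$, so that the mean curvature of $\partial M$ at $x$ computed with respect to $\tilde\nu$ equals $-H(x)$, where $H$ is taken with respect to the outward normal $-\tilde\nu$. The hypothesis $H(x) > n + 2nb$ then reads: the mean curvature of $\partial M$ with respect to $\tilde\nu$ is $< -n - 2nb$ at every $x \in \partial M$. For each $x \in \partial M$ put $\gamma_x(t) = \exp_x(t\,\tilde\nu(x))$ and define $\tau(x)$ as in \eqref{def:focalRadius}; exactly as in the proof of Theorem~\ref{thm:willmore-asymptotic}, $0 < \tau(x) \le \infty$ and $\tau$ is continuous on $\partial M$. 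The Riccati and Jacobi field computations in that proof, and in particular the Claim established there, are entirely local along $\gamma_x$ on the interval $(0,\tau(x))$ and never use that the hypersurface bounds a domain; moreover the Ricci hypothesis $\mathrm{Ric}(x) \ge -n - n\lambda(d(o,x))$ here is precisely the one used there, and $\Lambda(t) = \lambda(d(o,\gamma_x(t))) \in L^1([0,\infty))$ by the triangle inequality and monotonicity of $\lambda$. Hence the Claim applies with $\Sigma$ replaced by $\partial M$: since the initial mean curvature is $< -n - 2nb$, we get $\tau(x) < \infty$ for every $x \in \partial M$.

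Next I would prove that $\Phi(x,r) := \exp_x(r\,\tilde\nu(x))$, defined on the set $D := \{(x,r) : x \in \partial M,\ 0 \le r \le \tau(x)\}$, maps onto $M$. Given $y \in M$, compactness of $\partial M$ yields a nearest boundary point $x_0$ and a minimizing geodesic $\sigma$ of length $d(y,\partial M)$ from $y$ to $x_0$. A first-variation argument shows $\sigma$ is orthogonal to $\partial M$ at $x_0$, and $\sigma$ cannot meet $\partial M$ at an interior time (otherwise the distance from $y$ to $\partial M$ would be strictly smaller), so the reverse of $\sigma$ is an inward normal geodesic issuing from $x_0$ that realizes $d(\,\cdot\,,\partial M)$ up to its endpoint; therefore $d(y,\partial M) \le \tau(x_0)$ and $y = \Phi(x_0,\, d(y,\partial M)) \in \Phi(D)$.

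Finally, since $\partial M$ is compact and $\tau$ is continuous and finite-valued, $T := \max_{\partial M}\tau < \infty$, so $D \subseteq \partial M \times [0,T]$ is compact. As $\Phi$ is continuous and $\Phi(D) = M$, the manifold $M$ is compact. I expect the ODE input to require no new work (it is contained in the Claim); the points that need care are the sign bookkeeping for the mean curvature under the choice of inward normal, and the verification that every interior point of $M$ lies within the focal radius of its nearest boundary point, so that $\Phi$ genuinely covers $M$.
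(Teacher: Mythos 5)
Your proposal is correct and follows essentially the same route as the paper: flip the sign of the normal so that the initial mean curvature of $\partial M$ with respect to the inward normal is $< -n-2nb$, invoke the Claim from the proof of Theorem~\ref{thm:willmore-asymptotic} to conclude $\tau(x)<\infty$ for every $x\in\partial M$, and then use compactness of $\partial M$ and continuity of $\tau$ to bound the distance from any point of $M$ to $\partial M$. The paper phrases the conclusion as a proof by contradiction and is terser about why the normal exponential map covers $M$; your nearest-point/first-variation argument makes that covering step explicit, but it is the same geometric idea rather than a different approach.
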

\begin{proof}
Suppose, for contradiction, that $M$ is not compact. We consider the inward normal exponential map
$$
\Phi : \left\{ (x, r) \in \partial M \times [0, \infty) \mid 0 \leq r < \tau(x) \right\} \to M \setminus \mathrm{Cut}(\partial M),
$$
defined by $\Phi(x, r) = \exp_x(-r \nu(x))$, where $\nu(x)$ is the outward unit normal vector at $x$, and $\tau(x)$ is the focal locus of $\gamma_x(t)= \exp_x(-t \nu(x))$.
As in the proof of Theorem~\ref{thm:willmore-asymptotic}, define $m(t): =\mathrm{tr}(Y(t))$, which represents the mean curvature of the parallel hypersurface at distance $t$ from $\partial M$. Then
$$
m'(t) + \frac{1}{n} m(t)^2 \leq n + n \lambda(d(o,\gamma_x(t))),
$$
with initial value $m(0) = -H(x) < -n - 2nb$ for all $x \in \partial M$.
By the same argument used in the claim of Theorem~\ref{thm:willmore-asymptotic}, it follows that $\tau(x) < \infty$ for all $x \in \partial M$.

Since $\partial M$ is compact and $\tau(x)$ is continuous on $\partial M$, this implies there exists $R > 0$ such that every inward normal geodesic encounters a focal point within distance $R$. Thus, the image of the inward normal exponential map is contained in a bounded subset of $M$, contradicting the assumption that $M$ is non-compact.
\end{proof}

In the remainder of this section, we prove that the limit $\mathrm{RV}(\Omega)$ is finite.

\begin{thm}\label{thm:RV-asymptotic}
    Let $(M,g)$ be a complete non-compact Riemannian manifold of dimension $n+1$.
    Let $o \in M$ be a base point, and let $\lambda : [0,\infty) \to [0,\infty)$ be a non-increasing function in $L^1([0,\infty))$ such that
    $$
        \mathrm{Ric}(x) \ge -n - n \lambda(d(o,x)),
    $$
    where $d(o,x)$ denotes the Riemannian distance from $o$ to $x$.
    Let $\Omega \subset M$ be a bounded domain with smooth boundary.
    The limit 
    $$\lim_{r \to \infty} \frac{\mathrm{vol}\{x \in M \,:\, d(x, \Omega) \leq r\}}{\omega_n \int_0^r (\sinh s)^n \, ds}$$ exists and is finite.
\end{thm}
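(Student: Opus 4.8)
The plan is to mimic the structure of the proof of Theorem~\ref{thm:willmore-asymptotic}, but now I need the \emph{full} limit to exist, not merely a $\limsup$-type estimate, so I must produce matching upper and lower bounds for the numerator $V(r) := \mathrm{vol}\{x \in M : d(x,\Omega) \le r\}$ up to the normalizing factor $\omega_n \int_0^r (\sinh s)^n\, ds \sim \omega_n \tfrac{e^{nr}}{n 2^n}$. First I would set up the same normal exponential map $\Phi(x,r) = \exp_x(r\nu(x))$ off $\Sigma = \partial\Omega$, the Jacobi field matrix $X(t)$, the Riccati quantity $m(t) = \mathrm{tr}(X^{-1}X')$, and the comparison function $\psi(t) = \psi_2(t) + \tfrac1n H(x)\psi_1(t)$ built from the solutions of \eqref{df:psi1psi2} with $\Lambda(t) = \lambda(d(o,\gamma_x(t)))$. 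The key new input is that Lemmas~\ref{lem:comparison1} and~\ref{lem:comparison2} give \emph{two-sided} control: $\sinh t \le \psi_1(t)$ and $\cosh t \le \psi_1'(t)$ on the one hand, and the upper bounds involving $e^{\int \Lambda}$ on the other, so that the ratio $\psi_1(t)/\sinh t$ and the ratio $\psi_2(t)/\psi_1(t)$ both converge as $t \to \infty$ (items~\eqref{item:lem-comparison1-3} and~\eqref{item:comp2-item2}).

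For the upper bound I would reuse the estimate \eqref{ineq:det-estimation}: on $\Sigma \cap \{H \ge -n-2nb\}$ we have $\det X(t) \le e^{2nb}\big(\cosh t + (2b + \tfrac1n H(x))\sinh t\big)^n$, while on $\Sigma \cap \{H < -n-2nb\}$ the Claim gives $\tau(x) < \infty$, so that part of the integral is $O(1)$ in $r$ (in fact the integrand is eventually zero). Integrating in $t$ up to $r$ and using that $\Sigma$ is compact with $\tau$ continuous, this yields $V(r) \le \big(\text{const}\big)\tfrac{e^{nr}}{n2^n} + O(e^{(n-2)r})$, giving $\limsup_{r\to\infty} V(r)/(\omega_n\int_0^r(\sinh s)^n ds) < \infty$. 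This already proves finiteness of the $\liminf$ defining $\mathrm{RV}(\Omega)$; the substantive remaining point is existence of the genuine limit.

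For existence of the limit I would argue that the quantity
$$
F(r) := \int_\Sigma J(x,r)\, d\mathrm{vol}_\Sigma(x),
\qquad\text{so that}\qquad
V(r) = \int_0^r F(t)\, dt,
$$
is, for each fixed $x$ with $\tau(x) = \infty$, asymptotic to a clean exponential: from \eqref{eq:detxt-monotone}, $\tfrac{d}{dt}\log\det X(t) = m(t)$ and, crucially, \emph{both} $\det X(t)/ (e^{nt})$ converges (via the analogue of Lemma~\ref{lem:comparison1}\eqref{item:lem-comparison1-3} applied to the matrix Riccati flow — monotonicity of $\det X(t)/\psi_1(t)^n$ together with the two-sided bounds forces $\det X(t) \psi_1(t)^{-n}$ to converge, and $\psi_1(t)/\sinh t$ converges, so $\det X(t)/\sinh^n t \to \ell(x)$ for some finite $\ell(x) \ge 0$). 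By the dominated convergence theorem — dominated using \eqref{ineq:det-estimation} after cutting off the region $\{H < -n-2nb\}$ where $J$ vanishes for large $t$ — we get $F(r)/\sinh^n r \to \int_\Sigma \ell(x)\, d\mathrm{vol}_\Sigma(x) =: L$. Since $\sinh^n r \sim e^{nr}/2^n$ and $\int_0^r (\sinh s)^n ds \sim e^{nr}/(n2^n)$, an elementary Cesàro/L'Hôpital argument then gives
$$
\lim_{r\to\infty} \frac{V(r)}{\omega_n \int_0^r (\sinh s)^n\, ds} = \frac{L}{\omega_n},
$$
which is finite. The main obstacle is the first half of this last paragraph: upgrading the mere \emph{boundedness} of $\det X(t)/\sinh^n t$ to genuine \emph{convergence} for each $x$ with $\tau(x)=\infty$. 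I expect to handle it by showing directly that $\big(\det X(t)/\psi_1(t)^n\big)' \le 0$ (monotone and bounded below by $0$, hence convergent), which should follow from \eqref{eq:detxt-monotone} rewritten as $\tfrac{d}{dt}\log\big(\det X(t)/\psi(t)^n\big) \le 0$ combined with the fact that $\psi(t)/\psi_1(t) = \psi_2(t)/\psi_1(t) + \tfrac1n H(x)$ converges by Lemma~\ref{lem:comparison2}\eqref{item:comp2-item2}; then convergence of $\psi_1(t)/\sinh t$ from Lemma~\ref{lem:comparison1}\eqref{item:lem-comparison1-3} closes the loop.
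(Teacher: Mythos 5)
Your proposal takes essentially the same route as the paper's proof of Theorem~\ref{thm:RV-asymptotic}: for fixed $x$ decompose the ratio as $(\bar J/\psi^n)\cdot(\psi/\psi_1)^n\cdot(\psi_1/\sinh)^n$, where the first factor is monotone decreasing and bounded in $[0,1]$ (hence convergent) by \eqref{eq:detxt-monotone}, the second converges by Lemma~\ref{lem:comparison2}\eqref{item:comp2-item2}, and the third converges by Lemma~\ref{lem:comparison1}\eqref{item:lem-comparison1-3}, then pass to the integral by dominated convergence and finish with L'H\^{o}pital. Your passing claim that $\det X(t)/\psi_1(t)^n$ is itself monotone is a small slip (it need not be), but you immediately replace it with the correct chain through $\det X/\psi^n$ and the two convergent ratios, so the substance matches the paper.
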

\begin{proof}
    If $\mathrm{vol}(M) =0$, then the limit clearly vanishes. 
    We may thus assume that $\mathrm{vol}(M)$ is infinite.
    We first show that the limit
    $$
      \lim_{r \to \infty} \int_{\partial \Omega} \frac{\bar J (x,r)}{\omega_n (\sinh r)^n}d\mathrm{vol}_{\partial \Omega}
    $$
    exists and is finite, using the Dominated Convergence Theorem. 
    Then, by L'H\^{o}pital's rule, we conclude that the limit
    $$
    \lim_{r \to \infty} \frac{\operatorname{vol}\{x \in M : d(x, \Omega) \leq r\}}{\omega_n \int_0^r (\sinh s)^n \, ds}
    $$
    exists and equals the limit above.

    To analyze the integrand, observe that
    $$\frac{\bar J (x,r)}{\omega_n (\sinh r)^n} =  \frac{1}{\omega_n}\frac{\bar J (x,r)}{\psi(r)^n}\cdot \frac{\psi(r)^n}{\psi_1(r)^n} \cdot \frac{\psi_1(r)^n}{(\sinh r)^n},$$
    where $\psi_1(r)$ and $\psi(r)$ are defined in \eqref{df:psi1psi2} and \eqref{df:psi}.
    
    From inequality \eqref{eq:detxt-monotone}, the ratio $\frac{\bar J(x,r)}{\psi(r)^n}$ is monotone decreasing in $r$ and bounded between $0$ and $1$.
    Therefore, the limit $\lim\limits_{r \to \infty} \frac{\bar{J}(x, r)}{\psi(r)^n}$ exists.
    Recall that
    $\psi(r) = \psi_2(r) + \frac{1}{n}H(x) \psi_1(r)$. Then
    $$\left(\frac{\psi(r)}{\psi_1(r)}\right)^n = \left(\frac{\psi_2(r)}{\psi_1(r)} + \frac{1}{n}H(x)\right)^n \leq \left( \coth r+ 2b + \frac{1}{n}H(x)\right)^n,$$
    where $b = \int_0^\infty \lambda(t) \,dt$.
    Since Lemma~\ref{lem:comparison2}~\eqref{item:comp2-item2} shows that the limit $\lim\limits_{r \to \infty}\frac{\psi_2(r)}{\psi_1(r)}$ exists, it follows that the limit of $\left(\frac{\psi(r)}{\psi_1(r)}\right)^n$ also exists as $r \to \infty$.
    In addition, Lemma~\ref{lem:comparison1}~\eqref{item:lem-comparison1-3} and the inequality \eqref{ineq:lem1-exp} imply that
    the limit $\lim\limits_{r \to \infty} \left(\frac{\psi_1(r)}{\sinh r}\right)^n$ exists and satisfies
    $$1\leq \left(\frac{\psi_1(r)}{\sinh r}\right)^n \leq e^{2nb}.$$

    Thus, the limit $\lim\limits_{r \to \infty} \frac{\bar J (x,r)}{\omega_n (\sinh r)^n}$ exists and
     $$\frac{\bar J (x,r)}{\omega_n (\sinh r)^n} \leq  \frac{1}{\omega_n}\cdot 1\cdot \left( \coth t_0+ 2b +\frac{1}{n}H(x)\right)^n \cdot e^{2nb}$$
     for $r > t_0$ (some fixed number).
     Since $\partial \Omega$ is compact and smooth, and $H(x)$ is continuous on $\partial \Omega$, the integrand $\frac{\bar J(x,r)}{\omega_n (\sinh r)^n}$ is bounded above by $L^1(\partial \Omega)$ function.
    Hence, the dominated convergence theorem is applied and the limit 
    $$\lim_{r \to \infty} \int_{\partial \Omega} \frac{\bar J (x,r)}{\omega_n (\sinh r)^n}d\mathrm{vol}_{\partial \Omega} = \int_{\partial \Omega} \lim_{r\to \infty}\left(\frac{\bar J(x,r)}{\omega_n (\sinh r)^n}\right)\, d\mathrm{vol}_{\partial \Omega}$$
    exists and is finite.
\end{proof}


\section{Willmore inequality with integral Ricci curvature}
 In this section, we establish a Willmore-type inequality involving integral Ricci curvature (Theorem~\ref{thm:willmore-integral}). 
 Unlike the asymptotic setting treated in Section~2, 
 we now assume only that the function $\rho(x)= \max\{0,-n-\mathrm{Ric}(x)\}$ belongs to $L^p(M)$ for some $p > \frac{n+1}{2}$.
 This integral condition allows for more general geometric settings but requires different techniques.

The key step in our proof is establishing a volume estimate for tubular neighborhoods of hypersurfaces under integral Ricci curvature bounds. Gallot’s result \cite[Theorem 2]{Gallot88} yields a lower bound for a Willmore-type functional with suitable choices of the parameter $\alpha$. However, this lower bound can become negative when the volume of the given hypersurface is large or integral Ricci curvature is large. 
Petersen and Sprouse \cite[Lemma 4.1]{PetersenSprouse} also obtain an upper bound for the volume of tubular neighborhoods of hypersurfaces, but their results requires that the hypersurface has constant mean curvature.
To overcome these limitations, we develop a more refined volume estimate with a suitable Willmore-type functional that remains non-trivial and effective.
 
 Our strategy begins with the derivation of an $L^p$ version of mean curvature comparison in inequality \eqref{ineq:mean-integral-comp}.
 While mean curvature comparison with integral Ricci curvature has been well-studied for geodesic balls in manifolds \cite{petersen-wei}, leading to volume comparisons for geodesic balls \cite{petersen-wei,ChenWei22}, the hypersurface setting presents additional challenges. Specifically, our tubular neighborhood of hypersurface has no constant mean curvature, preventing direct application of existing techniques.
 To address this difficulty, we establish a Jacobian determinant comparison in \eqref{ineq:determinantJacob}.
 We then apply Lemma~\ref{thm:estimate}, which provides a delicate estimate for $(1+b)^p$ that allows us to control error terms arising from integral Ricci curvature. This estimate ultimately yields the desired Willmore-type inequality with an error term $C(n,p,\|\rho\|_p) \to 0$ as $\|\rho\|_p \to 0^+$, recovering the classical result under the pointwise Ricci lower bound $\mathrm{Ric}\geq -n$.

\begin{lem}\label{thm:estimate}
    Let $p>1$, $q>p-1$, and $\epsilon>0$. Then there exists a constant $C(p,q,\epsilon)\geq 0$ such that for all $b \geq 0$, we have
    $$(1+b)^p \leq 1+\epsilon + C(p,q,\epsilon)\epsilon^{-q} b^p.$$
    Moreover, $C(p,q,\epsilon)$ is continuous with respect to $\epsilon$ and $\lim_{\epsilon \to 0^+}C(p,q,\epsilon) = 0$.
\end{lem}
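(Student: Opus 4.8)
The plan is to rescale the inequality into a one-variable estimate, and then bound the resulting function by a sharp convexity inequality with a parameter tuned to $\epsilon$.

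\emph{Reduction.} For $b=0$ the asserted inequality reads $1\le 1+\epsilon$, which is true. For $b>0$, dividing $(1+b)^p\le 1+\epsilon+C\epsilon^{-q}b^p$ by $b^p$ and setting $t=1/b$ shows that, since $b\mapsto 1/b$ is a bijection of $(0,\infty)$, the inequality holds for all $b>0$ if and only if
$$
(1+t)^p-(1+\epsilon)\,t^p\le C\,\epsilon^{-q}\qquad\text{for all }t>0 .
$$
Thus it suffices to produce $C=C(p,q,\epsilon)\ge 0$, continuous in $\epsilon$ on $(0,\infty)$ with $C\to 0$ as $\epsilon\to 0^+$, satisfying this one-variable inequality.

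\emph{Bounding the one-variable expression.} By convexity of $s\mapsto s^p$ on $[0,\infty)$ (Jensen's inequality with weights $\lambda$ and $1-\lambda$), for every $\lambda\in(0,1)$ and $a,c\ge 0$,
$$
(a+c)^p=\Bigl(\lambda\cdot\tfrac{a}{\lambda}+(1-\lambda)\cdot\tfrac{c}{1-\lambda}\Bigr)^{p}\le \lambda^{1-p}a^p+(1-\lambda)^{1-p}c^p .
$$
Apply this with $a=1$, $c=t$ and the choice
$$
\lambda=\lambda(\epsilon):=1-(1+\epsilon)^{-1/(p-1)}\in(0,1),
$$
made precisely so that $(1-\lambda(\epsilon))^{1-p}=1+\epsilon$. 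We obtain $(1+t)^p\le \lambda(\epsilon)^{1-p}+(1+\epsilon)t^p$, hence
$$
(1+t)^p-(1+\epsilon)\,t^p\le \lambda(\epsilon)^{1-p}\qquad\text{for all }t>0 .
$$

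\emph{Conclusion and asymptotics.} Set
$$
C(p,q,\epsilon):=\epsilon^q\,\lambda(\epsilon)^{1-p}=\epsilon^q\bigl(1-(1+\epsilon)^{-1/(p-1)}\bigr)^{1-p}\ge 0 .
$$
By the previous step and the reduction, $(1+b)^p\le 1+\epsilon+C(p,q,\epsilon)\,\epsilon^{-q}b^p$ for all $b\ge 0$. For $\epsilon>0$ we have $(1+\epsilon)^{-1/(p-1)}\in(0,1)$, so $\lambda(\epsilon)\in(0,1)$ and $\epsilon\mapsto C(p,q,\epsilon)$ is continuous on $(0,\infty)$. Finally, $(1+\epsilon)^{-1/(p-1)}=1-\tfrac{\epsilon}{p-1}+O(\epsilon^2)$ as $\epsilon\to 0^+$, so $1-(1+\epsilon)^{-1/(p-1)}=\tfrac{\epsilon}{p-1}\bigl(1+O(\epsilon)\bigr)$ and therefore
$$
C(p,q,\epsilon)=(p-1)^{p-1}\,\epsilon^{\,q-(p-1)}\bigl(1+O(\epsilon)\bigr)\longrightarrow 0\quad\text{as }\epsilon\to 0^+,
$$
since $q>p-1$.

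\emph{Main obstacle.} The one substantive point is the decay rate of $C$: one needs the supremum over $t>0$ of $(1+t)^p-(1+\epsilon)t^p$ to blow up no faster than $\epsilon^{1-p}$ as $\epsilon\to 0^+$, and this rate is sharp, which is exactly why the hypothesis $q>p-1$ (rather than merely $q>0$) appears and is also sufficient. A naive estimate bounding $(1+b)^p$ by a pure power $(1+1/\delta)^p b^p$ on $\{b>\delta\}$ with threshold $\delta\sim\epsilon$ gives only $C=O(\epsilon^{q-p})$, which fails to vanish when $p-1<q<p$; choosing $\lambda(\epsilon)$ so that $(1-\lambda(\epsilon))^{1-p}=1+\epsilon$ extracts the optimal constant $\lambda(\epsilon)^{1-p}\asymp\epsilon^{1-p}$, after which everything is elementary.
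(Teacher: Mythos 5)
Your proof is correct, and it arrives at exactly the same optimal constant as the paper but by a genuinely different route. The paper works directly with $F_{p,q,\epsilon}(b)=\dfrac{(1+b)^p-1-\epsilon}{\epsilon^{-q}b^p}$: it checks the boundary behavior ($F\to-\infty$ at $0^+$, $F\to\epsilon^q$ at $\infty$), locates the critical point $\tilde b=(1+\epsilon)^{1/(p-1)}-1$ by calculus, and sets $C=\sup_b F=\max\{F(\tilde b),\epsilon^q\}$. You instead substitute $t=1/b$ to reduce to bounding $(1+t)^p-(1+\epsilon)t^p$, and then invoke the weighted convexity (Young-type) inequality $(a+c)^p\le\lambda^{1-p}a^p+(1-\lambda)^{1-p}c^p$ with $\lambda(\epsilon)$ chosen so that $(1-\lambda)^{1-p}=1+\epsilon$; this gives the bound $\lambda(\epsilon)^{1-p}$ without any differentiation, and a short computation shows $\epsilon^q\lambda(\epsilon)^{1-p}=\epsilon^q(1+\epsilon)\tilde b^{\,1-p}=F_{p,q,\epsilon}(\tilde b)$, so your $C$ and the paper's coincide. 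Your approach has the advantage that sharpness is transparent from the equality case of the convexity inequality ($t^*=(1-\lambda)/\lambda$ corresponds exactly to $b=\tilde b$), so no verification that the critical point is a maximum is needed; and your closing asymptotic $\lambda(\epsilon)^{1-p}\asymp(p-1)^{p-1}\epsilon^{1-p}$ isolates cleanly why the threshold $q>p-1$ is both necessary and sufficient, which the paper establishes via an L'H\^opital computation. The paper's route is the more routine optimize-by-calculus argument, and spends a line checking $\lim_{b\to\infty}F=\epsilon^q$ that your substitution makes unnecessary.
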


\begin{proof}
    If $b=0$, then the inequality is trivial. Assume $b>0$.
    Consider the function
    $$
        F_{p,q,\epsilon}(b) := \frac{(1 + b)^p - 1 - \epsilon}{\epsilon^{-q} b^p}, \quad b > 0.
    $$
    The function $F_{p,q,\epsilon}$ is continuous on $(0, \infty)$. Since
    $$
        \lim_{b \to 0^+} F_{p,q,\epsilon}(b) = -\infty \quad \text{and} \quad 
        \lim_{b \to \infty} F_{p,q,\epsilon}(b)  = \epsilon^q,
    $$
    the function $F_{p,q,\epsilon}(b)$  is bounded above and attains a finite maximum.
    A critical point of the function $F_{p,q,\epsilon}(b)$ is
    $
        \tilde{b} := (1 + \epsilon)^{\frac{1}{p-1}} - 1 > 0
    $
    and the critical value is
    \begin{align*}
        F_{p,q,\epsilon}(\tilde{b}) 
         = \left(\frac{\epsilon^{\frac{q}{p-1}}}{(1+\epsilon)^{\frac{1}{p-1}} -1} + \epsilon^{\frac{q}{p-1}}\right)^{p-1}.
    \end{align*}
    Note that $\lim\limits_{\epsilon \to 0^+} F_{p,q,\epsilon}(\tilde{b}) = 0$ because
    \begin{align*}
        \lim_{\epsilon\to 0^+}\frac{\epsilon^{\frac{q}{p-1}}}{(1+\epsilon)^{\frac{1}{p-1}} -1} = \lim_{\epsilon\to 0^+} \frac{\frac{q}{p-1}\epsilon^{\frac{q}{p-1}-1}}{\frac{1}{p-1}(1+\epsilon)^{\frac{1}{p-1}-1}} = 0
    \end{align*}
    provided that $q > p - 1 $.
    
    We define the constant
    $$
        C(p,q,\epsilon) := \sup_{b > 0} F_{p,q,\epsilon}(b) = \max\left\{F_{p,q,\epsilon}(\tilde{b}),\, \epsilon^q\right\}.
    $$
    Then $C(p,q,\epsilon)$ is continuous with respect to $\epsilon$ and $\lim\limits_{\epsilon \to 0^+}C(p,q,\epsilon) = 0$.
\end{proof}

With the above estimate in hand, we are now ready to prove the main theorem.

\begin{proof}[Proof of Theorem~\ref{thm:willmore-integral}]
    As the notation and definitions of the relevant geometric objects are the same as in the proof of Theorem~\ref{thm:willmore-asymptotic}, we proceed directly to the differential inequality
        \begin{equation*}
            m'(x,t)  + \frac{1}{n} m(x,t)^2\leq -\mathrm{Ric}(\gamma_x'(t), \gamma_x'(t)),
        \end{equation*}
    with initial condition $m(x,0) = H(x)$, where $H(x)$ denotes the mean curvature of $\Sigma = \partial \Omega$ at $x \in \Sigma$, and $\gamma_x(t)$ is the unit-speed geodesic emanating from $x$ in the outward normal direction.
    Recalling the function $\rho(x) := \max\{-n - \mathrm{Ric}(x),\, 0\}$, we have
    \begin{equation}\label{ineq:riccati}
        m'(x,t) + \frac{1}{n} m(x,t)^2 \leq n + \rho(\gamma_x(t))
    \end{equation}
    for  $0 \leq t < \tau(x)$, where $\tau(x)$ is the focal radius defined in \eqref{def:focalRadius}. 

    We introduce the comparison functions
    $$
        \widehat{m}(x,t) := \frac{n \left(\sinh t + \frac{H(x)}{n} \cosh t\right)}{\cosh t + \frac{H(x)}{n} \sinh t}
        \quad \text{and} \quad
        \widehat{J}(x,t) := \left( \cosh t + \frac{H(x)}{n} \sinh t \right)^n,
    $$
    which correspond to the mean curvature and Jacobian determinant of geodesic spheres in hyperbolic space. The functions satisfy
    $$
        \widehat{m}'(x,t) + \frac{1}{n} \widehat{m}(x,t)^2 = n
        \quad \text{and} \quad
        \frac{\widehat{J}'(x,t)}{\widehat{J}(x,t)} = \widehat{m}(x,t).
    $$
    Consequently, the difference $m(x,t) - \widehat{m}(x,t)$ satisfies
    \begin{align*}
        (m(x,t) - \widehat{m}(x,t))'
        &\leq -\frac{1}{n} \left(m(x,t)^2 - \widehat{m}(x,t)^2\right) + \rho(\gamma_x(t)) \\
        &= -\frac{1}{n} \left(m(x,t) - \widehat{m}(x,t)\right)^2
            - \frac{2}{n} \left(m(x,t) - \widehat{m}(x,t)\right) \widehat{m}(x,t)
         + \rho(\gamma_x(t)).
    \end{align*}
    To measure the deviation from the comparison function $\widehat{m}(x,t)$, we define
    $$
        \phi(x,t) := \max\{m(x,t) - \widehat{m}(x,t),\, 0\}.
    $$
    By construction, $\phi(x,0) = 0$. From the previous differential inequality, we obtain
    $$
        \rho(\gamma_x(t)) \geq \phi'(x,t) + \frac{1}{n} \phi(x,t)^2 + \frac{2}{n} \phi(x,t) \widehat{m}(x,t)
    $$
    for $0 \leq t < \tau (x)$.
    We now multiply both sides by $\phi(x,t)^{2p-2} J(x,t)$, where $J(x,t)$ denotes the extended Jacobian determinant of the normal exponential map defined in \eqref{def:jxt}. This yields
    \begin{align*}
        \rho(\gamma_x(t))\, \phi^{2p-2} J
        &\geq \phi' \phi^{2p-2} J + \frac{1}{n} \phi^{2p} J + \frac{2}{n} \widehat{m} \phi^{2p-1} J \\
        &= \frac{1}{2p - 1} \left( \phi^{2p - 1} J \right)' - \frac{1}{2p - 1} \phi^{2p - 1} m J + \frac{1}{n} \phi^{2p} J + \frac{2}{n} \widehat{m} \phi^{2p - 1} J \\
        &\geq \frac{1}{2p - 1} \left( \phi^{2p - 1} J \right)' + \left( \frac{1}{n} - \frac{1}{2p - 1} \right) \phi^{2p} J + \left( \frac{2}{n} - \frac{1}{2p - 1} \right) \widehat{m} \phi^{2p - 1} J.
    \end{align*}
     The last term in the previous inequality is non-negative under our assumptions $H(x)\geq 0$ and $p > \frac{n+1}{2}$.
     It follows that
    $$
        \rho(\gamma_x(t))\, \phi^{2p-2} J \geq \frac{1}{2p - 1} \left( \phi^{2p - 1} J\right)' + \left( \frac{1}{n} - \frac{1}{2p - 1} \right) \phi^{2p}J .
    $$
    Integrating both sides from $0$ to $r$, we obtain
    \begin{align*}
        \int_0^r \rho(\gamma_x(t))\, \phi^{2p - 2} J \, dt
        &\geq \frac{1}{2p - 1} \phi(x,r)^{2p - 1} J(x,r)
        + \frac{2p - 1 - n}{n(2p - 1)} \int_0^r \phi^{2p} J \, dt \\
        &\geq \frac{2p - 1 - n}{n(2p - 1)} \int_0^r \phi^{2p} J \, dt.
    \end{align*}
    Applying H\"{o}lder's inequality to the left-hand side of the previous inequality, we obtain
        \begin{align*}
        \frac{2p - 1 - n}{n(2p - 1)} \int_0^r \phi^{2p} J \, dt
        &\leq \left( \int_0^r \rho(\gamma_x(t))^{p} J \, dt \right)^{\frac{1}{p}}
        \left( \int_0^r \phi^{2p} J \, dt \right)^{1 - \frac{1}{p}}.
        \end{align*}
    Rearranging terms, we find
    $$
        \frac{2p - 1 - n}{n(2p - 1)} \left( \int_0^r \phi(x,t)^{2p} J(x,t) \, dt \right)^{\frac{1}{p}}
        \leq \left( \int_0^r \rho(\gamma_x(t))^{p} J(x,t) \, dt \right)^{\frac{1}{p}},
    $$
    or equivalently,
    \begin{equation} \label{ineq:mean-integral-comp}
        \int_0^r \phi(x,t)^{2p} J(x,t) \, dt
        \leq \left( \frac{n(2p - 1)}{2p - 1 - n} \right)^p
        \int_0^r \rho(\gamma_x(t))^{p} J(x,t) \, dt.
    \end{equation}
    
    We now consider the derivative of the Jacobian ratio  $J(x,t)/\widehat{J}(x,t)$. A direct computation gives
    $$
        \left( \frac{J(x,t)}{\widehat{J}(x,t)} \right)' 
        = \frac{(m(x,t) - \widehat{m}(x,t)) J(x,t)}{\widehat{J}(x,t)} \leq \phi(x,t) \cdot \frac{J(x,t)}{\widehat{J}(x,t)}.
    $$
    This implies
    $$
        \left( \frac{J(x,t)}{\widehat{J}(x,t)} \right)^{\frac{1}{2p} - 1} \left( \frac{J(x,t)}{\widehat{J}(x,t)} \right)' \leq \phi(x,t) \left( \frac{J(x,t)}{\widehat{J}(x,t)} \right)^{\frac{1}{2p}}.
    $$
    Integrating both sides over $[0, r]$, we obtain
    \begin{align*}
        2p \left[ \left( \frac{J(x,r)}{\widehat{J}(x,r)} \right)^{\frac{1}{2p}} - 1 \right]
        &\leq \int_0^r \phi(x,t) \left( \frac{J(x,t)}{\widehat{J}(x,t)} \right)^{\frac{1}{2p}} \, dt \\
        &\leq \left( \int_0^r \phi(x,t)^{2p} J(x,t) \, dt \right)^{\frac{1}{2p}} \left( \int_0^r \widehat{J}(x,t)^{-\frac{1}{2p - 1}} \, dt \right)^{1 - \frac{1}{2p}},
    \end{align*}
    where the last inequality follows from H\"{o}lder's inequality.

    Under our assumption  $H(x) \geq 0$,
    $$C(n,p) : = \frac{1}{2p}\left( \int_0^\infty \widehat{J}(x,t)^{-\frac{1}{2p - 1}} \, dt \right)^{1 - \frac{1}{2p}} < \infty$$
    Then the previous inequality becomes
    \begin{align*}
         \left( \frac{J(x,r)}{\widehat{J}(x,r)} \right)^{\frac{1}{2p}} - 1 
        \leq C(n,p)
        \left( \int_0^r \phi(x,t)^{2p} J(x,t) \, dt \right)^{\frac{1}{2p}}
    \end{align*}
    and so
    \begin{equation}\label{ineq:determinantJacob}
        J(x,r) \leq \widehat{J}(x,r) \left( 1 + C(n,p)
        \left( \int_0^r \phi(x,t)^{2p} J(x,t) \, dt \right)^{\frac{1}{2p}}\right)^{2p}.
    \end{equation}
   For any $q> 2p - 1$ and $\epsilon>0$, Lemma~\ref{thm:estimate} gives
    \begin{equation*}
        J(x,r) \leq \widehat{J}(x,r) \left( 1 + \epsilon + C(n,p,q,\epsilon) \epsilon^{-q}
        \int_0^r \phi(x,t)^{2p} J(x,t) \, dt \right),
    \end{equation*}
   where $C(n,p,q,\epsilon)$ is continuous with respect to $\epsilon$ and $C(n,p,q,\epsilon)\to 0$  as  $\epsilon \to 0^+$.

   We now estimate the volume of a tubular neighborhood of $\Sigma = \partial \Omega$ in $M \setminus \Omega$.
    Applying the Jacobian estimate derived earlier, we have
    {\allowdisplaybreaks
    \begin{align*}
        \mathrm{vol}&\{x \in M \setminus \Omega : d(x,\Sigma) < R\}\\
        & = \int_\Sigma \int_0^RJ(x,t) \, dt \, d\mathrm{vol}_\Sigma(x)\\
        &\leq \int_\Sigma \int_0^R \widehat{J}(x,r) \left( 1 + \epsilon + C(n,p,q,\epsilon)\, \epsilon^{-q}
        \int_0^r \phi(x,t)^{2p} J(x,t) \, dt \right) dr\, d\mathrm{vol}_\Sigma(x) \\
        &= (1 + \epsilon) \int_\Sigma \int_0^R \widehat{J}(x,r) \, dr\, d\mathrm{vol}_\Sigma(x) \\
        &\quad + C(n,p,q,\epsilon)\, \epsilon^{-q}
        \int_\Sigma \int_0^R \widehat{J}(x,r) \left( \int_0^r \phi(x,t)^{2p} J(x,t)\, dt \right) dr\, d\mathrm{vol}_\Sigma(x)\\
        &\leq (1 + \epsilon) \int_\Sigma \int_0^R \widehat{J}(x,r) \, dr\, d\mathrm{vol}_\Sigma(x) \\
        &\quad + C(n,p,q,\epsilon)\, \epsilon^{-q}
        \int_\Sigma \left( \int_0^R \widehat{J}(x,r)\, dr \right)
        \left( \int_0^R \phi(x,t)^{2p} J(x,t)\, dt \right) d\mathrm{vol}_\Sigma(x)\\
        &\leq (1 + \epsilon) \int_\Sigma \int_0^R \widehat{J}(x,r) \, dr\, d\mathrm{vol}_\Sigma(x) \\
        &\quad + C(n,p,q,\epsilon) \epsilon^{-q}
        \left( \int_0^R \widehat{J}(\xi,r)\, dr \right)
        \left( \int_\Sigma \int_0^R \rho(x)^p J(x,t) \, dt\, d\mathrm{vol}_\Sigma(x) \right),
    \end{align*}}
    where we used the inequality \eqref{ineq:mean-integral-comp} in the last inequality and $\xi \in \Sigma$ be a point at which the mean curvature attains its maximum.

    Using the asymptotic expansion
    $$
        \widehat{J}(x,t) = \left( \cosh t + \frac{H(x)}{n} \sinh t \right)^n
        = \frac{e^{nt}}{2^n} \left(1 + \frac{H(x)}{n} \right)^n + O\left(e^{(n-2)t}\right),
    $$
    we have
    \begin{align*}
        \mathrm{vol}&\{x \in M \setminus \Omega : d(x,\Sigma) < R\}\\
        &\leq (1 + \epsilon) \int_\Sigma \left[ \frac{e^{nR}}{n \cdot 2^n} \left(1 + \frac{H(x)}{n} \right)^n + O\left(e^{(n-2)R}\right) \right] d\mathrm{vol}_\Sigma(x) \\
        &\quad + C(n,p,q,\epsilon) \epsilon^{-q}
        \left[ \frac{e^{nR}}{n \cdot 2^n} \left(1 + \frac{H(\xi)}{n} \right)^n + O\left(e^{(n-2)R}\right) \right]  \int_\Sigma \int_0^R \rho(x)^p J(x,t) \, dt\, d\mathrm{vol}_\Sigma(x).
    \end{align*}
    Dividing both sides of the inequality by $\frac{e^{nR}}{n \cdot 2^n}$ and taking the limit as $R \to \infty$, we obtain
    $$
        \mathrm{RV}(\Omega) \cdot \omega_n
        \leq (1 + \epsilon) \int_\Sigma \left(1 + \frac{H(x)}{n} \right)^n \, d\mathrm{vol}_\Sigma(x)
        + C(n,p,q,\epsilon)\, \epsilon^{-q} \left(1 + \frac{H(\xi)}{n} \right)^n \|\rho\|_p^p.
    $$
    When $\|\rho\|_p \neq 0$, by choosing $\epsilon = \|\rho\|_p^{\frac{1}{2}}>0$ and $q = 2p > 2p - 1$, we obtain
    $$
        \mathrm{RV}(\Omega) \cdot \omega_n
        \leq \left(1 + \|\rho\|_p^{\frac{1}{2}} \right)
        \int_\Sigma \left(1 + \frac{H(x)}{n} \right)^n \, d\mathrm{vol}_\Sigma(x)
        + C(n,p,\|\rho\|_p) \left(1 + \frac{H(\xi)}{n} \right)^n,
    $$
    where $\displaystyle \lim_{\|\rho\|_p \to 0^+} C(n,p,\|\rho\|_p) = 0$.

    When $\|\rho\|_p = 0$, the Ricci curvature satisfies the pointwise lower bound $\mathrm{Ric} \geq -n$, and the above inequality holds by \cite[Theorem 1.3]{jin-yin}, with  $C(n,p,0) :=0$.
    \end{proof}

\bibliographystyle{plain}
\bibliography{willmore}

 \vskip 1cm
 \noindent Jihye Lee\\
 Department of Mathematics\\
University of California, Santa Barbara, CA 93106, USA\\
 {\tt E-mail:jihye@ucsb.edu} \\

\end{document}